\newcommand {\R}{\mathbb R}
\newcommand{\p}{\mathbb P}
\newcommand{\e}{\mathbb E}
\newcommand{\ce}{{\cal E}}
\newcommand {\eq}[1]{\begin {equation} #1 \end {equation}}
\newcommand {\eqn}[1]{\begin {equation*} #1 \end {equation*}}
\renewcommand {\d}{\text{d}}
\newcommand {\f}{F}
\newcommand{\g}{f}
\newcommand\lk{\left[}
\newcommand\rk{\right]}
\renewcommand\lg{\left\{}  
\newcommand\rg{\right\}}
\newcommand\lp{\left(}
\newcommand\rp{\right)}
\renewcommand {\sf}{{\cal F}}
\theoremstyle{definition}\newtheorem{thm}{Theorem}[section]
\theoremstyle{definition}\newtheorem{lem}[thm]{Lemma}
\theoremstyle{definition}\newtheorem{cor}[thm]{Corollary}
\theoremstyle{definition}
\theoremstyle{definition}\newtheorem{defi}[thm]{Definition}
\theoremstyle{definition}\newtheorem{rem}[thm]{Remark}
\theoremstyle{definition}\newtheorem{prop}[thm]{Proposition}
\theoremstyle{definition}
\theoremstyle{definition}
\numberwithin{equation}{section}
\renewcommand {\sp}{\R^d}
\DeclareMathOperator* {\argmin}{argmin}
\newcommand{\al}{\alpha}
\newcommand{\no}[1]{|{#1}|}
\newcommand{\norm}[1]{||{#1}||}
\newcommand{\m}{\mathbf M}
\newcommand{\n}{\mathbf N}
\newcommand{\N}{{\cal N}}
\newcommand{\ufai}{U_n}
\newcommand{\ufak}{U_k}
\newcommand{\ufa}{U}
\newcommand{\ufuk}{U_k}
\newcommand{\ufu}{U}
\newcommand{\sfai}{R_n}
\newcommand{\sfa}{R}
\newcommand{\cfui}{C^{\al}_n}
\newcommand{\cfu}{C^{\al}}
\newcommand{\Pn}{\Phi_n}
\newcommand{\Prn}{\Phi_n^r}
\newcommand{\xfai}{X_n}
\newcommand{\xfui}{X_n}
\newcommand{\ol}{\overline}
\newcommand{\cp}{{\cal P}}
\newcommand{\fp}{{\p_\Phi}}
\newcommand{\fe}{{\e_\Phi}}
\newcommand{\fpi}{\p^{\g}_\Phi}
\newcommand{\cfpn}{\cp^{\g,n}_0}
\newcommand{\cfp}{{\cp_0}}
\newcommand{\cfpz}{{\cp^{\g,0}_0}}
\newcommand{\cfpi}{\cp^{\g}_0}
\newcommand{\ccfpn}{\widetilde\cp^{\g,n}_0}
\newcommand{\ccfpi}{\widetilde\cp^{\g}_0}
\newcommand{\cfe}{\ce_0}
\newcommand{\Z}{\mathbb Z}
\newcommand{\q}{\mathbb Q}
\newcommand{\cfq}{{\cal Q}_0}
\newcommand{\cq}{{\cal Q}}
\newcommand{\id}{\text{id}}
\newcommand{\ind}{\mathbf 1}
\newcommand{\fst}{S}
\newcommand{\fua}{d_{\al}}
\newcommand{\om}{\omega}
\newcommand{\ir}{\Bbb{R}}
\newcommand{\bbN}{\Bbb{N}}
\newcommand{\Om}{\Omega}
\newcommand{\pom}{point-shift}
\newcommand{\Pom}{Point-shift}
\newcommand{\PoM}{Point-Shift}
\newcommand{\pomk}{{point-map}}
\newcommand{\PoMk}{{Point-Map}}
\newcommand{\Pomk}{{Point-map}}
\newcommand{\php}{(\Phi,\p)}
\newcommand{\thetg}{\theta_\g}
\newcommand{\Thetg}{(\theta_{\g})_*}
\newcommand{\tov}{\stackrel{v}{\to}}
\newcommand{\tow}{\stackrel{w}{\to}}
\newcommand{\tod}{\stackrel{d}{\to}}
\newcommand{\bob}{{\cal B}_b}
\newcommand{\bo}{{\cal B}}
\newcommand{\ps}{\Phi_*}
\begin{document}

\author{Fran\c cois Baccelli\thanks{baccelli@math.utexas.edu} \\{\small University of Texas at Austin} 
\and Mir-Omid Haji-Mirsadeghi\thanks{mirsadeghi@sharif.ir}\\{\small Sharif University of Technology}}

\title{Point-Map-Probabilities of a Point Process \\
and Mecke's Invariant Measure Equation}
\date{}
\maketitle
\begin{abstract}
A compatible \pom{} $\f$ maps, in a translation invariant way,
each point of a stationary point process $\Phi$ to some point of $\Phi$.
It is fully determined by its associated \pomk{}, $\g$, which gives 
the image of the origin by $\f$. It was proved by
J. Mecke that if $\f$ is bijective, then the Palm probability of
$\Phi$ is left invariant by the translation of  $-\g$. 
The initial question motivating this paper is the following
generalization of this invariance result: in 
the non-bijective case, what probability measures
on the set of counting measures are left invariant 
by the translation of $-\g$?  The \pomk{}-probabilities of 
$\Phi$ are defined from the action of the semigroup of \pomk{} 
translations on the space of Palm probabilities, and more precisely from 
the compactification of the orbits of this semigroup action.
If the \pomk{}-probability exists, is uniquely defined, and if
it satisfies certain continuity properties,
it then provides a solution to this invariant measure problem. 
\Pomk{}-probabilities are objects of independent interest.
They are shown to be a strict generalization of Palm probabilities:
when $\f$ is bijective, the \pomk{}-probability
of $\Phi$ boils down to the Palm probability of $\Phi$.
When it is not bijective, there exist cases where the \pomk{}-probability
of $\Phi$ is singular with respect to its Palm probability.
A tightness based criterion for the existence of the \pomk{}-probabilities
of a stationary point process is given.
An interpretation of the \pomk{}-probability as the conditional law of the
point process given that the origin has $\f$-pre-images of all orders is
also provided. The results are illustrated by a few examples.
\end{abstract}
{\bf Key words:} Point process, Stationarity, Palm probability, \Pom, \Pomk, Allocation rule, Va\-gue topology, Mass transport principle, Dynamical system, $\omega$-limit set.

\noindent{\bf MSC 2010 subject classification:} Primary: 60G10, 60G55, 60G57; Secondary: 60G30, 60F17.

\section*{Introduction}

A \emph{\pom{}} is a mapping which is defined on all discrete
subsets $\phi$ of $\sp$ and maps each point $x\in\phi$ to some
point $y\in\phi$; i.e., if $\f$ is a \pom, for all discrete
$\phi\subset\sp$ and all $x\in\phi$, $\f(\phi,x)\in\phi$.
Bijective point-shifts were studied in a seminal paper by
J. Mecke in \cite{Me75}. 
The concept of point-map was introduced by
H. Thorisson (see \cite{Th00} and the references therein).
Points-maps were further studied 
by M. Heveling and G. Last 
\cite{HeLa05}. 
The latter reference also contains 
a short proof of Mecke’s invariance theorem.
\Pom{}s are also known as allocation rules (see e.g. \cite{LaTh09}).
A \pom{} is \emph{compatible with the translations of $\sp$}
or simply \emph{compatible} if
\eqn{\forall t\in\sp, \quad \f(\phi+t,x+t)=\f(\phi,x)+t.}
As will be seen, a translation invariant \pom{} $\f$ is
fully determined by its \pomk{} $\g$ which associates to 
all $\phi$ containing the origin the image of the latter by $\f$, i.e.,
$\g(\phi)=\f({\phi},0)$.
The \pom{} $\f$ is called bijective on the point process $\Phi$ if,
for almost all realizations $\phi$ of the point process,
$\f(\phi,\cdot)$ is bijective on the set $\phi$.

The Palm probability of a translation invariant point process
$\Phi$ is often intuitively described as the distribution of $\Phi$ conditionally
on the presence of a point at the origin. 
This definition was formalized by C. Ryll-Nardzewski \cite{RN} based on
the Matthes definition of Palm probabilities 
(see e.g. \cite{baccelli03elements}). This is the so called
{\em local interpretation} of the latter.
The presence of a point at the origin makes
the Palm distribution of $\Phi$ singular with respect to (w.r.t.) the
translation invariant distribution of $\Phi$.

The present paper is focused on the
\pomk{}-probabilities (or the $\g$-probabilities) of $\Phi$.  
Under conditions described in the paper, the $\g$-pro\-ba\-bi\-li\-ties can
be described as the law of $\Phi$ conditionally
on the event that the origin has $\f$-pre-images of all 
orders (Theorem \ref{thmloc}). This event 
is not of positive probability in general,
and hence it is not possible to define this conditional probability
in the usual way.

The first aim of this paper is to make this definition rigorous.
The proposed construction is based on dynamical system theory.
The action of the semigroup of translations by $-f$
on probability distributions on counting measures
having a point at the origin is considered;
the $f$-probabilities of $\Phi$ are then defined as the $\omega$-limits
of the orbit of this semigroup action on the Palm distribution of $\Phi$
(Definition \ref{deffond}).
As the space of probability
distributions on counting measures is not compact, the existence
of $f$-probabilities of $\Phi$ is not granted.
A necessary and sufficient conditions for their existence 
is given in Lemma \ref{kalen}. Uniqueness is not granted either.
An instance of construction of the $f$-probabilities of Poisson point processes
where one has existence and
uniqueness is given in Theorem \ref{regenver}.

It is shown in Section \ref{summary} that,
when they exist, \pomk{}-probabilities
generalize Palm probabilities.
A key notion to see this is that of
{\em evaporation}. One says that there is evaporation
when the image of $\Phi$ by the
$n$-th iterate of $F$ tends to the empty counting measure
for $n$ tending to infinity.

When there is no evaporation,
the $f$-probabilities of $\Phi$ are just the Palm
distributions of $\Phi$ w.r.t. certain translation invariant thinnings of $\Phi$
and they are then absolutely continuous w.r.t. the Palm distribution
$\cfp$ of $\Phi$; in particular, if $\f$ is bijective, then
the $f$-probability of $\Phi$ exists, is uniquely defined, and coincides with ${\cal P}_0$.
However, in the evaporation case,
the $f$-probabilities of $\Phi$ do not admit a representation 
of this type and they are actually singular w.r.t. $\cfp$
(Theorem \ref{theevasin}).

It is also shown in Theorem \ref{continv}
that, under appropriate continuity properties on $f$, 
a certain mixture of the $f$-probabilities
of $\Phi$ is left invariant by the shift of $-f$.
This generalizes Mecke's point stationarity theorem which states that
if $\f(\Phi,\cdot)$ is bijective and 
if $\Phi$ is distributed according to ${\cal P}_0$,
then so is $\Phi-f$.

Section \ref{pernot} contains
the basic definitions and notation used in the paper, together
with a small set of key examples.
Section \ref{summary} gathers the main results and proofs.
Several more examples are discussed in Section~\ref{proofs}.
The basic tools of point process theory and
dynamical system theory used in the paper are 
summarized in the appendix.

\section{Preliminaries and Notation}\label{pernot}
 
\subsection{General Notation} Each measurable mapping 
$h:(X,{\cal X})\to(X^\prime,{\cal X}^\prime)$
between two measurable spaces induces a measurable mapping
$h_*:M(X)\to M(X^\prime)$, where $M(X)$ is the set of
all  measures on $X$: 
if $\mu$ is a measure on $(X,{\cal X})$, $h_*\mu$ is the
measure on $(X',{\cal X}')$ defined by 
\eq{\label{indmeasu}h_*\mu(A):= (h_*\mu)(A)=\mu(h^{-1} A) .}
Note that if $\mu$ is a probability measure, $h_*\mu$ is also
a probability measure. 

\subsection{Point Processes}
Let $\n=\n(\sp)$ be the space of all locally finite counting measures
(not necessarily simple) on $\sp$.
One can identify
each element of $\n$ with the associated multi-subset of $\sp$.
The notation $\phi$ will be used
to denote either the measure or the related multi-set.
Let $\N$ be the
Borel $\sigma$-field with respect
to the vague topology on the space of counting measures 
(see Subsection \ref{RandMeas} in appendix for more on this subject). 
The measurable space $(\n,\N)$ is the
\emph{canonical space} of point processes.  

The \emph{support} of a counting measure $\phi$ is the same set
as the multi-set related to $\phi$, but without the multiplicities, and
it is denoted by $\ol\phi$. 
The set of all counting measure supports
is denoted by $\ol\n$, i.e.,
$\ol\n$ is the set of all \emph{simple} counting measures.
$\N$ naturally induces a $\sigma$-field $\ol\N$ on $\ol\n$.
 
Let $\n^0$ (respectively, $\overline\n^0$) denote the set of
all elements of $\n$ (respectively, $\overline\n$) which contain
the origin, i.e., for all $\phi\in \n^0$
(respectively, $\phi\in\overline\n^0$), one has $0\in\phi$.

A $\emph{point process}$ is  a
couple $\php$ where $\p$ is a probability measure on a
measurable space $(\Om,\sf)$ and $\Phi$ is a measurable mapping
from $(\Om,\sf)$ to $(\n,\N)$.
If $(\Om,\sf)=(\n,\N)$ and $\Phi$ is the identity on $\n$, 
the point process is defined on the canonical space.
Calligraphic letters $\cp,\cq,\ldots$
(resp. blackboard bold letters $\p,\q,\ldots$) will be used
for probability measures defined on the canonical space 
(resp. on $(\Om,\sf)$). 
The \emph{canonical version} of a point process $(\Phi,\p)$ is
the point process $(\id,\ps\p)$ which is defined on the canonical space.
Here $\id$ denotes the identity on $\n$.

\subsection{Stationary Point Processes}
Whenever $(\sp,+)$ acts (in a measurable way) on a space, 
the action of $t\in\sp$ on that space will be denoted by $\theta_t$. 
It is assumed that $(\sp,+)$ acts on the reference
probability space $(\Omega,\sf)$, or equivalently
that this space is equipped with a measurable
flow $\theta_t:\Omega\rightarrow \Omega$, with $t$ ranging over $\sp$. 
This is a family of mappings such that
$(\om,t)\mapsto\theta_t\om$ is measurable, $\theta_0$ is the identity
on $\Om$ and \eqn{\theta_s\circ\theta_t = \theta_{s+t}.}
 
A point process $\Phi$ is then said to be \emph{compatible} if
\eq{\label{comp.crit}\Phi(\theta_t\omega,B-t)=\Phi(\omega, B), \quad \forall\omega\in\Omega, t\in\sp,B\in{\cal B},}
where by convention, 
$\Phi(\om,B):=(\Phi(\om))(B).$ 
Here ${\cal B}$ denotes the Borel $\sigma$-algebra on $\R^d$.

The action $\theta_t$ of $t\in \sp$ can also be
used on the space of counting measures to denote the translation by $-t$.
For a counting measure $\phi\in\n(\sp)$,
$\theta_t\phi$ is then the counting measure defined by 
$ \theta_t\phi(B)=\phi(B+t).$
Using this notation, the compatibility criterion (\ref{comp.crit})
can be rewritten as 
\eqn{\Phi\circ\theta_t=\theta_t\circ \Phi.}
Note that for consistency reasons,
the action $\theta_t$ of $t\in \sp$ on $\sp$ itself is then
$\theta_t x= x-t,\quad \forall x\in \sp.$

The probability measure $\p$ on $(\Omega,\sf)$ is $\theta_t$-invariant if 
$(\theta_{t})_*\p=\p$.
If, for all $t\in\sp$, $\p$ is $\theta_t$-invariant, it is called stationary.
Below,  a \emph{stationary point process}
is a point process $(\Phi,\p)$ such that $\Phi$ is compatible
and $\p$ is stationary. 

When the point process is simple and stationary with a non-degenerate 
(positive and finite) intensity, its
\emph{Palm probability} is a classical object in the literature.

The Palm probability
of a general (i.e., not necessarily simple) point process $\Phi$
is defined by
\eq{\label {pag}\fp[A] :=\frac 1 {\lambda \no B} \int_{\Omega}
\int_{B}  \mathbf {1} \{\theta_t\om\in A\} \Phi(\om,\d t)\p [\d\om],}
for all $A \in {\sf}$,
and for all Borel sets $B\subset \R^d$ with a non-degenerate
(positive and finite)
Lebesgue measure. Note that the multiplicity of the atoms
of $\Phi$ is taken into account in the last definition.
If a point process $(\Phi,\p)$ is stationary and
has a non-degenerate intensity, the pair
$(\Phi,\fp)$ is called the \emph{Palm version}
of $(\Phi,\p)$.
Expectation w.r.t. $\fp$ will be denoted  by $\fe$.

Whenever the context specifies
a reference point process $(\Phi,\p)$, the short notation
$\cp$ will be used to denote its distribution: i.e., $\cp=\ps\p$.
If in addition, $\Phi$ is stationary and with a non-degenerate intensity,
the distribution of its Palm version
will be denoted by $\cfp$, i.e., $\cfp=\ps\fp$, 
and expectation  w.r.t. $\cfp$ will be denoted by ${\cal E}_0$. 
In the canonical setup, the Palm version
of $(\Phi,\p)=(\id,\cp)$ is
$(\Phi,\fp)=(\id,\cfp)$.

\subsection{Compatible \PoM s} 
\label{sec:cpoms}
\subsubsection{Point-Maps}
A \pom{} on $\n$ is a measurable function
$\f:\n\times \R^d \to \R^d$,
which is defined for all pairs $(\phi, x)$, where $\phi\in\n$
and $x\in\phi$, and satisfies the relation
$\f(\phi,x) \in \phi$ for all $x\in \phi$.

In order to define compatible \pom{}s,
it is  convenient to use the notion of \emph{\pomk}.
A measurable function $\g:\n^0 \to \sp$
is called a \pomk{} if for all 
$\phi$ in $\n^0$,
one has $\g(\phi)=\g(\ol\phi)$,
i.e., it depends only on $\ol\phi$, and
if $\g(\phi)\in \ol\phi$.
 
If $\g$ is a  \pomk, the associated
\emph{compatible \pom, $\f$, is}
$$\f(\phi,x) = \g(\theta_x\phi) + x
  =\theta_{-x} \g(\theta_x\phi) .$$
The \pom{} $\f$ is compatible in the sense that
\begin{eqnarray}\label{compf}
\f(\theta_t\phi,\theta_tx) &=& 
\f(\theta_t\phi, x -t)
=\g(\theta_{x -t}(\theta_t\phi))+x-t\nonumber\\
&=&\g(\theta_x\phi)+x-t=\f(\phi,x)-t=
\theta_t(\f(\phi,x)).
\end{eqnarray}
In the rest of this article, \pom{} always means  compatible \pom{}. 
Small letters will be used for \pomk s and capital letters for
the associated \pom s.

For the \pomk{} $\g$, the \emph{action of the \pomk} on $\n^0(\sp)$
will be denoted by $\thetg$ and defined by 
\eqn{\forall \phi\in\n^0(\sp); \quad\thetg(\phi)=\theta_{\g(\phi)}(\phi).}

\subsubsection{Iterates of a Point-Shift}
\label{sec:cpoms-ite}
For all $n\ge 0$, all $\phi\in  \n$ and $x\in \phi$,
the $n$-th order iterate of the point-shift $\f$
is defined inductively
by $\f^0(\phi,x)=x$ and
$$\f^{k+1}(\phi,x)=
\f (\phi, \f^k(\phi,x)), \quad k\ge 0.
$$
For all $n$, $F^n$ is a compatible \pom{} and
the associated point-map,
which will be denoted by $\g^n$, satisfies
\eq{\label{nthpom} \g^n(\phi)= 
\g^{n-1}(\phi) + \g(\theta_{f^{n-1}}(\phi)),\quad n\ge 1,}
with $\g^0(\phi)=0$ and $\phi\in\n^0$. It is easy to verify that for all $n\in \bbN$, on $\n^0$,  
\eqn{\theta_{\g^n}= \thetg^n,}
and hence 
\eq{\label{semigroupg}\theta_{\g^{m+n}}=\theta_{\g^m}\circ\theta_{\g^n}.}

In accordance with
the definition of $F^n$, for all $n\ge 1$, let
\eqn{F^{-n}(\phi,x)=\{y\in\phi; F^n(\phi,y)=x\}.}
\subsubsection{Image Point Processes}
\label{sec:cpoms-mul}

Let $f$ be a \pomk. For all $\phi \in \n$ and all nonnegative
integers $n$, let
\eq{\label{phifn}
m_\g^n(\phi,y)= \phi(F^{-n}(\phi,y))=
\sum_{x\in F^{-n}(\phi,y)}\phi(\{x\}),
\quad \forall y\in \phi,}
where by convention, the summation over the empty set is zero.
Note that if $\phi$ is simple, then
$ m_\g^n(\phi, y)=\mathrm{card}(F^{-n}(\phi,y)).$
\begin{defi}
Assume that $m_\g^n(\phi,y)<\infty$ for all $y\in \phi$.
The $n$-th image counting measure (of $\phi$ by $\f$) is then
defined as the counting measure $\phi^n_\g$ with support 
$\{y\in\phi;F^{-n}(\phi,y)\neq \emptyset\}$,
and such that the multiplicity of $y$ in the support of $\phi_f^n$
is $m_\g^n(\phi,y)$.
\end{defi}

It will be shown below that,
for all stationary point processes $(\Phi,\p)$,
for all $n\ge 0$, $(\Phi^n_f,\p)$ is a stationary point process
(item 1 in Remark \ref{remfincond})
with the same intensity as $\Phi$
(item 2 in Remark \ref{remfincond}).
The point process $\Phi^n_f$ will be referred to as the $n$-th {\em image
point process} (of $\Phi$ by the \pomk{}).

\subsection{First \PoM{} Examples}
This subsection presents a few basic examples of \pom{}s.
These examples will allow one to illustrate the main
results in Section \ref{summary}.
More details on these examples and further examples can
be found in Section \ref{proofs}.

\subsubsection{Strip  \PoM{}} \label{sr0}
The strip \pom{} was introduced by Ferrari,
Landim and Thorisson \cite{FeLaTh04}. 
For all points $x=(x_1,x_2)$ in the plane, let $T(x)$ denote the 
half strip $(x_1,\infty)\times [x_2-1,x_2+1]$. 
Then $S(\phi,x)$ is the
left most point of $\phi$ in $T(x)$ (see Figure \ref{fig0}).
It is easy to verify that $S$ is
compatible. It is not bijective.
Its \pomk{} will be denoted by $s$.

\begin{rem}
The strip \pom{} is not well defined when there are more than one
left most point in $T(x)$, or when there is no
point of $\phi$ in $T(x)$.
However there is no problem if we consider the strip  \pom{}
(and all other \pom s) on point processes for which the \pom{}
is almost surely well-defined. Note that these
two difficulties can always be taken care of by fixing,
in some translation invariant manner, 
the choice of the image in the case of ambiguity,
and by defining $F(\phi, x)=x$
in the case of non-existence. By doing so one gets a
\pom{} defined for all $(\phi,x)$.
\end{rem}

\subsubsection{Strip \PoM{} on the Random Geometric Graph} \label{drgg}
The \emph{strip \pomk{} on the random geometric graph}
with the neighborhood radius $r$ is
\eqn{g(\phi)=
\begin{cases}
s(\phi) & \norm{s(\phi)}<r\\
0 & \text{otherwise,}
\end{cases}
}
where $s$ is the strip \pomk{}.
The associated \pom{} is depicted in Figure
\ref{fig0}. It will be denoted by $G$. It is not bijective.
Its \pomk{} will be denoted by $g$.

\begin{figure}[h]
\centering
{{\includegraphics[width=0.47\linewidth]{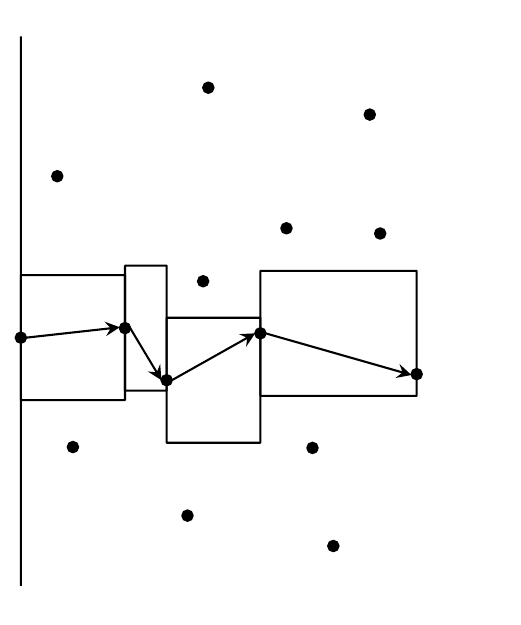}}
{\includegraphics[width=0.47\linewidth]{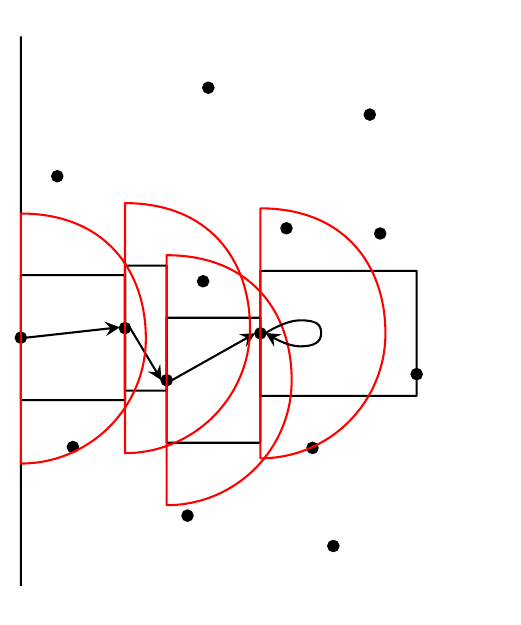}}
\caption{Left: Iterates of the strip \pom{} $S$.
Right: Iterates of $G$, the strip \pom{} on the random geometric graph.
In both cases, the point
$G^4(\phi,0)$ is that at the end of the directed path.}
\label{fig0}}
\end{figure}

\subsubsection{Closest \PoM}
The \emph{closest \pom{}}, $C$, maps each point of
$x\in \phi$ to the point $y\ne x$ of $\phi$ which is the closest. 
This \pom{} is not bijective either.
The associated \pomk{} will be denoted by $c$.
It is depicted in Figure \ref{fig01}.

\subsubsection{Mutual-Neighbor \PoM}
The \emph{mutual-neighbor \pom{}, $N$,} 
maps each point
$x\in\phi$ to the point $y$ of $\phi$ which is the
closest to $x$, if $x$ is also the point of $\phi$ which is the
closest to $y$. Otherwise, it maps $x$ to itself.
It is easy to see that $N$ is bijective and involutive:
$N^2\equiv\id$. The associated \pomk{} will be denoted by $n$.
It is depicted in Figure \ref{fig01}.

\begin{figure}[h]
\centering
{\fbox{\includegraphics[width=0.47\linewidth]{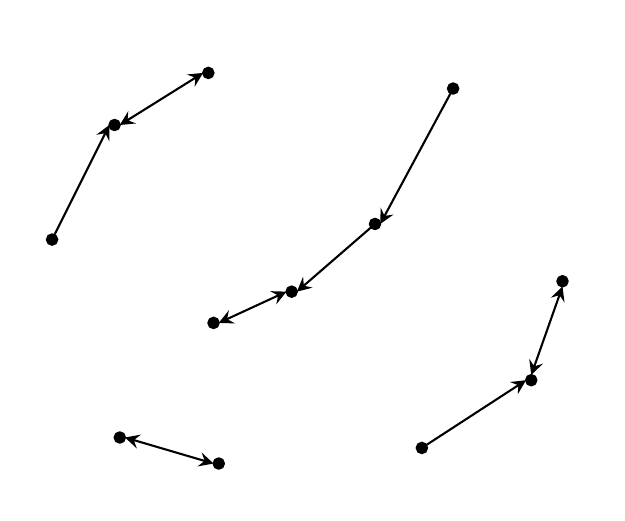}}
\fbox{\includegraphics[width=0.47\linewidth]{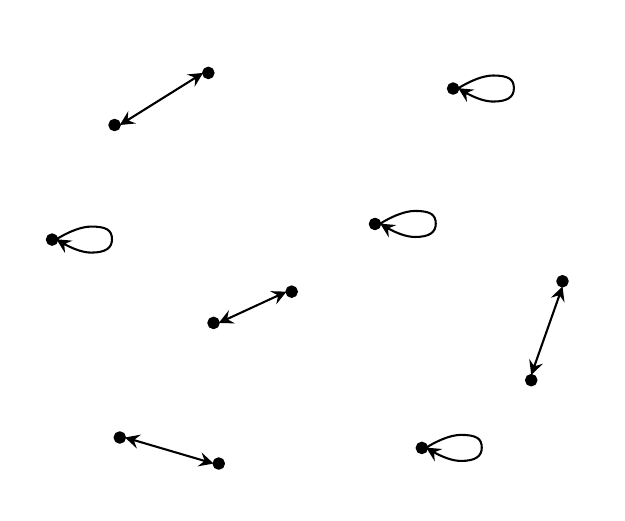}}
\caption{Left: the closest \pom{} $C$.
Right: the mutual-neighbor \pom{} $N$.
The directed edge emanating from a point indicates the image of the point.
\label{fig01}}
}
\end{figure}

\subsection{Mecke's Point Stationarity Theorem}
One of the motivations of this work is to extend  the following
proposition proved by J. Mecke in \cite{Me75}.
\begin{thm}[Point Stationarity]\label{point.sta}
Let $(\Phi,\p)$ be a simple stationary point process and 
let $\f$
be a \pom{} such that $\f(\Phi,\cdot)$ is $\p$-a.s. bijective.
Then the Palm probability of the point-process
is invariant under the action of $\theta_{f}$; i.e.,
\eq{\label{second.thet}\fp=(\theta_{\g(\Phi)})_*\fp,}
with
$\theta_{\g(\Phi)}$ seen as a map from
$\Omega$ to itself defined by
\eqn{\label{first.thet}
(\theta_{\g(\Phi)})(\om):=\theta_{\g(\Phi(\om))}\om. 
}
Since $\fp[\Phi(\{0\})>0]=1$, $\theta_{\g(\Phi)}$
is $\fp$-almost surely well defined.
\end{thm}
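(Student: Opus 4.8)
The plan is to prove the invariance $(\theta_{g(\Phi)})_*\fp=\fp$ by testing against an arbitrary nonnegative measurable $F$ on $(\Om,\sf)$ and showing $\fp[F\circ\theta_{g(\Phi)}]=\fp[F]$, where $\fp[\cdot]$ denotes integration against the Palm probability. Since $\fp[\Phi(\{0\})>0]=1$, the origin is $\fp$-almost surely a point of $\Phi$, so $\theta_{g(\Phi)}(\om)=\theta_{f_{\Phi(\om)}(0)}\om$ is well defined and it suffices to verify the scalar identity for all such $F$. The main tool I would use is the \emph{mass transport principle} for the Palm version of a stationary point process: for every nonnegative measurable $T(x,y,\om)$ that is \emph{diagonally invariant}, i.e. $T(x-t,y-t,\theta_t\om)=T(x,y,\om)$ for all $t\in\sp$, one has
\eqn{\fp\Big[\sum_{y\in\Phi}T(0,y,\cdot)\Big]=\fp\Big[\sum_{x\in\Phi}T(x,0,\cdot)\Big].}
This is a standard consequence of Palm calculus (refined Campbell theorem / Neveu exchange formula), which I would either cite or record in a preliminary lemma.

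The heart of the argument is the choice of transport. I would take
\eqn{T(x,y,\om):=\mathbf 1\{f_{\Phi(\om)}(x)=y\}\,F(\theta_y\om),}
which routes to each point $x\in\Phi$ a mass $F(\theta_{f_\Phi(x)}\om)$ sent entirely to its image $f_\Phi(x)$. First I would check diagonal invariance: since $\Phi(\theta_t\om)=\Phi(\om)-t$, the compatibility relation \eqref{compf}, read as $f_{\Phi-t}(x-t)=f_\Phi(x)-t$, turns the indicator $\mathbf 1\{f_{\Phi-t}(x-t)=y-t\}$ into $\mathbf 1\{f_\Phi(x)=y\}$, while the flow property $\theta_{y-t}\theta_t=\theta_y$ gives $F(\theta_{y-t}\theta_t\om)=F(\theta_y\om)$; hence $T(x-t,y-t,\theta_t\om)=T(x,y,\om)$.

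Next I would evaluate the two sides of the transport identity. On the left, because $0\in\Phi$ $\fp$-almost surely and $f_\Phi(0)=g(\Phi)$, the sum over $y$ collapses to the single term $y=g(\Phi)$, yielding $\fp[F(\theta_{g(\Phi)}\cdot)]=(\theta_{g(\Phi)})_*\fp[F]$. On the right, $\theta_0=\id$ gives $\sum_{x\in\Phi}T(x,0,\om)=F(\om)\cdot\#\{x\in\Phi:f_\Phi(x)=0\}$, i.e. $F$ weighted by the number of $f$-preimages of the origin. Equating the two sides produces the claim \emph{provided this preimage count equals $1$}. This is exactly where bijectivity is decisive: as $f$ is bijective on $\ps\p$-almost every realization, the origin has a unique preimage $\fp$-almost surely, so $\#\{x\in\Phi:f_\Phi(x)=0\}=1$ and the right-hand side reduces to $\fp[F]$, giving $(\theta_{g(\Phi)})_*\fp[F]=\fp[F]$.

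I expect the genuine obstacle to be not the transport computation but the rigorous justification of the mass transport principle in the present (possibly non-simple) canonical setting, together with the measurability and finiteness bookkeeping needed to apply it; a routine truncation/monotone-class reduction handles the passage from bounded nonnegative $F$ to the general case. A minor point to dispatch is the transfer of bijectivity from the stationary law $\ps\p$ to $\fp$: the event that $f_\Phi$ is bijective is flow-invariant by \eqref{compf}, and a flow-invariant event of full $\ps\p$-measure also has full $\fp$-measure, so the a.s.\ uniqueness of the preimage used above is legitimate.
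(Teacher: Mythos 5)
Your proof is correct. Note first that the paper itself does not prove this statement: it is quoted as Mecke's point stationarity theorem with a citation to \cite{Me75}, so there is no internal proof to compare against. Your argument is the standard modern mass-transport proof (in the style of Last--Thorisson and Heveling--Last), and it is fully consistent with the toolkit the paper already uses elsewhere -- the same mass transport relation from \cite{LaTh09} is invoked in Lemma~\ref{convfrom0}. All the key steps check out: the diagonal invariance of $T(x,y,\om)=\mathbf 1\{f_{\Phi(\om)}(x)=y\}F(\theta_y\om)$ follows from \eqref{compf} and the flow property exactly as you say; the left side collapses to $(\theta_{g(\Phi)})_*\fp[F]$ because $\Phi$ is simple and $f_\Phi(0)$ is a single point of $\Phi$; the right side is $F$ weighted by the number of $f$-preimages of $0$, which equals $1$ $\fp$-a.s.\ precisely by bijectivity (surjectivity for existence, injectivity for uniqueness); and your transfer of the full-measure bijectivity event from $\ps\p$ to $\fp$ via flow-invariance and the defining formula \eqref{pag} is legitimate. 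Your worry about non-simple processes is moot here since the theorem is stated for simple point processes. The only contrast worth recording is that Mecke's original route goes through his integral characterization of Palm measures rather than a transport identity; the mass-transport formulation you use is essentially an equivalent repackaging, but it makes the role of bijectivity (one unit of mass out, one unit of mass in, at every point) completely transparent, which is exactly the mechanism this paper seeks to relax for non-bijective point-maps.
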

\begin{rem}
The fact that $\thetg$ is bijective $\Phi_*\fp$-a.s.
is equivalent to the fact that $F$ is 
bijective on $\ps\p$-almost all realizations
of the point process.
\end{rem}

\section{Results}\label{summary}

\subsection{Semigroup Actions of a \PoMk}
Below, 
$\n^0=\n^0(\R^d)$
and $\m^1(\n^0)$ denotes the set of probability measures
on $\n^0$. 
For all \pomk s $f$ on $\n^0$,
consider the following actions $\pi=\{\pi_n\}$ of $(\bbN,+)$:

\begin{enumerate}
\item $X=\n^0$,  equipped with the vague topology, and
for all $\phi\in \n^0$ and $n\in \bbN$,
$$\pi_n(\phi)=\thetg^n(\phi)\in \n^0,$$
where $\thetg^n$ is defined in Subsection~\ref{sec:cpoms-ite}.
\item $X=\m^1(\n^0)$, equipped with the weak convergence of probability measures on $\n^0$,
and for all ${\cal Q}\in \m^1(\n^0)$ and $n\in \bbN$,
$$\pi_n({\cal Q})=\Thetg^n{\cal Q}=(\thetg^n)_*{\cal Q}\in \m^1(\n^0).$$ 
\end{enumerate}
\subsection{Periodicity and Evaporation}

The \pomk{} $f$
will be said to be {\em periodic} on the stationary point process
$(\Phi,\p)$ if for $\ps\fp$-almost all $\phi$,
the action of $\thetg^n$ is periodic on $\phi$, namely if
there exists integers $p=p(\phi)$ and  $K=K(\phi)$
such that for all $n\ge K$, $\thetg^n(\phi)=\thetg^{n+p}(\phi)$. 
The case where $p$ is independent of $\phi$ is known as 
$p$-periodicity. 
The special case of $1$-periodicity is that where,
$\thetg^n(\phi)$ is 
{\em stationary} (in the dynamical system sense) after some steps,
i.e., such that  for all $n>K(\phi)$, $\thetg^n(\phi) = \thetg^K(\phi)$.
Note that if for all 
$x\in\phi$, the trajectory
$F^n(\phi,x)$ is stationary,
i.e., such that  for all $n>K(\phi,x)$, $\f^n(\phi,x)=\f^K(\phi,x)$,
then $\g$ is 1-periodic.

The mutual-neighbor \pomk{} $n$ on a homogeneous
Poisson point process is 2-periodic.

Similarly, for the closest \pomk{} $c$, the iterates of this
point-shift form a {\em descending chain}, namely a sequence
of point of the support of the point process such that the
distance between the $k+1$-st and the $k$-th
is non-increasing in $k\ge 0$.
The well known fact that there are no infinite
descending chains in the homogeneous Poisson point process
(see \cite{DaLa05}) implies that $c$ is 2-periodic on such a point process,
with the points of the period being mutual-neighbors.

If $g$ is the strip \pomk{} on the random geometric graph
defined in Subsection~\ref{drgg}, 
the strong Markov property of the stationary Poisson point process
on $\sp$ (see \cite{Zu06} for details on the strong Markov property of Poisson point process)   gives that the point process on the right half-plane of
$G(0)$ is distributed as the original Poisson point process.
Hence $G$ is a.s. 1-periodic, even
when the underlying random geometric graph is supercritical.

\begin{rem}
Note that there are other ways of defining periodicity, possibly leading to
other periods. For instance, for the mutual-neighbor \pomk{}
on a Poisson point process, the sequence of image
point processes $\{\Phi^f_n\}_{n\ge 0}$ 
(defined in Subsection \ref{sec:cpoms-mul})
is 1-periodic whereas $f$ is 2-periodic according to
the definition proposed above.
\end{rem}

The point process $(\Phi,\p)$ will be said to {\em evaporate}
under the action of the \pomk{} $f$ if $\ol{\Phi^n_f}$
converges a.s. to the null measure as $n$ tends to infinity,
i.e., for $\p$-almost surely, the set
\begin{equation}
\label{eq:definf}
\ol{\Phi_f^\infty}:=\bigcap_{n=1}^\infty \ol{\Phi_f^n}
\end{equation}
is equal to the empty set
(note that $\ol{\Phi^n_f}$ is a non increasing sequence of sets).
Consider the following set 
\begin{eqnarray}
\label{eq:III}
I &:= & \{\phi\in\n^0; \forall n\in\Bbb N,  \f^{-n}(\phi,0)\neq\emptyset\}
\nonumber \\
& = & \{\phi\in\n^0; \forall n\in\Bbb N,  m^n_f(\phi,0)>0\}
\end{eqnarray}
(see Subsection \ref{sec:cpoms-ite} for
the definition of $\f^{-n}(\phi,y)$ and Subsection 
\ref{sec:cpoms-mul} for that of $m^n_f$).
\begin{lem}\label{eveq}
For all \pomk s $f$, the 
stationary point process $(\Phi,\p)$
evaporates under the action of $f$ if and only if $\fp[\Phi\in I]=0$. 
\end{lem}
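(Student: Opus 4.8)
The plan is to read the set $I$ as the event that the origin is a point possessing $f$-preimages of all orders: since $\phi\in I$ means that $0\in\ol{\f^n(\phi)}$ for every $n\in\bbN$, one has $\phi\in I\iff 0\in\ol{\f^\infty(\phi)}$. I then introduce the point process $\Psi$ obtained by restricting $\Phi$ to its surviving support $\ol{\f^\infty(\Phi)}$ while keeping multiplicities, namely $\Psi(\{t\})=\Phi(\{t\})\,\mathbf 1\{t\in\ol{\f^\infty(\Phi)}\}$. The lemma then reduces to the statement that the stationary point process $\Psi$ is $\p$-almost surely the null measure if and only if it has zero intensity, the bridge between the two sides being supplied by the definition \eqref{pag} of the Palm probability.

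First I would settle measurability and equivariance. For each $n$ the set $\{\phi;0\in\ol{\f^n(\phi)}\}$ is measurable, hence so is $I=\bigcap_{n\in\bbN}\{\phi;0\in\ol{\f^n(\phi)}\}$. By the compatibility of $f^n$, the support of the $n$-th image commutes with translations, so that the support of the $n$-th image of $\theta_t\phi$ equals $\ol{\f^n(\phi)}-t$; intersecting over $n$ gives $\ol{\f^\infty(\theta_t\phi)}=\ol{\f^\infty(\phi)}-t$, whence the key equivalence
\eqn{\theta_t\phi\in I\iff t\in\ol{\f^\infty(\phi)}.}
The same identity shows that $\Psi$ is compatible, and since $\Psi\le\Phi$ it is locally finite; thus $(\Psi,\p)$ is a stationary point process.

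Next I would apply \eqref{pag} to the event $F=\{\om;\Phi(\om)\in I\}$, for which $\ps\fp[I]=\fp[F]$. Using $\Phi(\theta_t\om)=\theta_t\Phi(\om)$ together with the equivalence above, one gets $\mathbf 1\{\theta_t\om\in F\}=\mathbf 1\{t\in\ol{\f^\infty(\Phi(\om))}\}$, so that for every bounded Borel set $B$
\eqn{\lambda\,\no B\,\ps\fp[I]=\e\Big[\sum_{t\in\Phi\cap\widetilde B}\mathbf 1\{t\in\ol{\f^\infty(\Phi)}\}\Big]=\e[\Psi(B)]=\lambda_\Psi\,\no B,}
where $\lambda_\Psi$ denotes the intensity of $\Psi$; hence $\lambda_\Psi=\lambda\,\ps\fp[I]$.

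Finally, since $\lambda>0$ under the non-degenerate intensity assumption, $\ps\fp[I]=0$ is equivalent to $\lambda_\Psi=0$. A stationary point process has vanishing intensity exactly when it is $\p$-almost surely the null measure: if $\e[\Psi(B)]=0$ for each $B$ in a countable cover of $\sp$ by bounded sets, then $\Psi=0$ almost surely, and the converse is trivial. Since $\Psi=0$ almost surely is precisely $\ol{\f^\infty(\Phi)}=\emptyset$ almost surely, that is evaporation, the chain of equivalences closes. The step I expect to demand the most care is not this probabilistic inversion, which is immediate from \eqref{pag}, but the bookkeeping underneath it: the measurability and equivariance of the infinite-intersection map $\phi\mapsto\ol{\f^\infty(\phi)}$, and the correct matching of the multiplicities appearing in the multi-set sum of \eqref{pag} with the count of surviving points of $\Phi$.
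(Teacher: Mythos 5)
Your proposal is correct and follows essentially the same route as the paper: the paper also introduces the stationary sub-process $\Psi$ of points having $f$-pre-images of all orders (phrased there as a compatible marking) and obtains the identity $\lambda_\Psi=\lambda_\Phi\,\ps\fp[I]$, so that evaporation is equivalent to $\Psi$ having zero intensity. You merely make explicit some steps the paper leaves implicit (measurability and equivariance of $\phi\mapsto\ol{\f^\infty(\phi)}$, and that a stationary point process has zero intensity iff it is a.s.\ null), which is fine.
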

\begin{proof}
Let $\cp=\ps\p$ and  $\cfp=\ps\fp$. If $\chi(\phi,x)$
is the indicator of the fact that $x$ has $\f$-pre-images of all orders,
then $\chi$ is a compatible marking of the point process
(i.e., $\chi(\phi,x)= \chi(\theta_x \phi,0)$
for all $x\in \phi$).
Therefore if $\Psi$ denotes the sub-point process of the points 
with mark $\chi$ equal to 1, then $(\Psi,\p)$ is a stationary
point process and by Campbell's theorem,
\eq{\label{simmark}\lambda_\Psi=\lambda_\Phi\e_\Phi[\chi(\Phi,0)]
=\lambda_\Phi\fp[\Phi\in I].}
The evaporation of $(\Phi,\p)$ by $f$ means that
$\Psi$ has zero intensity.
According to  (\ref{simmark}) this is equivalent to $\fp[\Phi\in I]=0$.
\end{proof}

The homogeneous Poisson point process
on ${\mathbb R}^2$ evaporates under the
action of the strip \pomk{} $s$ (see Section \ref{proofs}).

\subsection{Action of $\Thetg$}
\subsubsection{Image Palm Probabilities}
Let $\Phi$ be a stationary point process on $\sp$ and
$f$ be a \pomk.
Consider the action of $\Thetg$ (see Equation (\ref{indmeasu}))
when $\cq=\cp_0$, the Palm distribution of $\Phi$.
It follows from the definition
and from (\ref{pag}) that,
for all $n\ge 1$, for all $G\in\N$
and for all Borel sets $B$ with
non-degenerate Lebesgue measure
\begin{eqnarray}
\label{eq:vbas}
(\thetg^n)_*\cfp[G]
& = & \frac 1 {\lambda \no B}
\int_{\n} \int_B
\mathbf {1} \{\thetg^n\circ \theta_t (\phi) \in G\}
\phi(\d t)
\cp [\d\phi].
\end{eqnarray}
In what follows,
$\cfpn$ is a short notation for
the probability on $\n^0$ defined in the last equation.
This probability will be referred to as the 
\emph{$n$-th image Palm probability (w.r.t. $\g$)} of the point process.

It follows from the semigroup property (\ref{semigroupg}) that
\eq{\label{cpntocpn+1}\Thetg\cfpn=\cp_0^{\g,n+1},
\quad \forall n\in \bbN ,}
when letting $\cfpz:=\cfp$.
From the mass transport relation \cite{LaTh09}, and
using the image counting measure $\phi_f^n$ defined
in Subsection~\ref{sec:cpoms-mul}, one gets:
\begin{lem}\label{convfrom0}
For all $n\ge 0$, and all $G\in {\cal N}$,
\begin{eqnarray}
\cfpn [G]
& = & \frac 1 {\lambda \no B}
\int_{\n}
\int_{B}
\mathbf {1} \{\theta_t\phi\in G\} \phi^n_f (\d t)
\cp [\d\phi].
\label {cffpan}
\end{eqnarray}
\end{lem}
Note that, in general, the $n$-th image Palm probability
$\cfpn$ is {\em not} the Palm probability of
the $n$-th image point process $\Phi^n_f$ (which is the distribution
of $\Phi^n_f$ given that the origin belongs to $\Phi^n_f$ when using
the local interpretation of the Palm probability).
It is rather is the distribution of $\Phi$ given
that the origin is in the $n$-th image process. In both cases,
point multiplicities should be taken into account.
\begin{rem}\label{remfincond}
Equation (\ref{cffpan}) has several important implications:
\begin{enumerate}
\item If $\cal P$ is the distribution of a simple stationary point process,
Equation (\ref{cffpan}) gives
\begin{eqnarray}
\cfpn [G] & = & 
{\cal E}_0 [m^n_\g 1_{G}], \quad \forall G,
\label {cffpan2}
\end{eqnarray}
with $m^n_\g$ the random variable $m^n_\g(\phi,0)$
(see Equation (\ref{phifn})) and 
$1_{G}$ the indicator function $1_{G}(\phi)$.
So taking $G=\n_0$ gives
\begin{equation}\label{eqmean1}
{\cal E}_0 [m^n_\g]=1,
\end{equation}
which shows that, ${\cal P}_0$ a.s.,
$m^n_\g(\phi)<\infty$. 
This in turn implies that, ${\cal P}$ a.s.,
for all $y\in \phi$, $m^n_\g(\phi,y)<\infty$.
\item 
Equation (\ref{eqmean1}) together with the Campbell-Mecke formula
imply that the intensity of $\Phi_\g^n$ 
is equal to that of $\Phi$, as already mentioned.
\item
Equation (\ref{cffpan2}) shows that $\cfpn$ is absolutely continuous
w.r.t. ${\cal P}_0$, with  
Radon-Nikodym derivative 
$$m^n_f:= m^n_f(\phi,0).$$
\end{enumerate}
\end{rem}

\begin{prop} 
For all simple point
processes ${\cal P}$, for all $n$ and $G$,
\begin{eqnarray}
\cfpn [G] =   {\cal E}_0 \left[
\frac {m^n_\g}{{\cal E}_0 [ m^n_\g\mid m^n_\g>0]}
1_{G} \mid m^n_\g>0\right].
\label {cffpan3}
\end{eqnarray}
\end{prop}
\begin{proof}
Equation (\ref{cffpan}) implies that
\begin{eqnarray*}
\cfpn [G] & = & 
{\cal E}_0 [m^n_\g 1_{G}]=
{\cal E}_0 [m^n_\g 1_{G} 1_{m^n_\g>0}],
\label {cffpan22}
\end{eqnarray*}
Taking $G=\n_0$ gives
\begin{eqnarray*}
{\cal P}_0 [m^n_\g>0]= \frac 1
{{\cal E}_0 [ m^n_\g\mid m^n_\g>0]}.
\label {cffpan33}
\end{eqnarray*}
Equation (\ref{cffpan3}) then follows immediately.
\end{proof}
\subsubsection{Definition and Existence
of Point-Map-Probabilities}

\begin{defi}
\label{deffond}
Let $\g$ be a \pomk{} and let $\cal P$ be
a stationary point process with
Palm distribution $\cp_0$.
Every element of the $\omega$-limit set (see (\ref{eq:omegalim}))  of $\cp_0$ 
(where limits are w.r.t. the topology of the convergence
in distribution of probability measures on $\n^0$ 
-- cf. Subsection~\ref{RandMeas})
under the action of $\{(\thetg^n)_*\}_{n\in \bbN}$
will be called a \emph{$\g$-probability of} $\cp_0$.
In particular, if the limit of the sequence
$((\thetg^n)_* \cp_0)_{n=1}^\infty=(\cfpn)_{n=1}^\infty$ exists,
it will be called {\em the} $\g$-probability of $\cp_0$
and denoted by $\cfpi$.

\end{defi}

Let $A_{\cp_0}$ denote the orbit of $\cp_0$.
The set of $\g$-probabilities of $\cp_0$ is hence the set of
all accumulation points of the closure $\mathrm{cl}(A_{\cp_0})$ of
$A_{\cp_0}$, or equivalently the elements of
$\m^1(\n^0)$ the neighborhoods of which contain infinitely
many elements of $A_{\cp_0}$ -- see the definitions
in Section \ref{pernot}.

\begin{rem}
In view of (\ref{cffpan2}), for all $\cal P$ simple,
the existence of a unique $\g$-probability $\cfpi$ 
is equivalent to
\begin{eqnarray}
\lim_{n\to \infty} \int_{\n_0} h(\phi) m^n_\g(\phi)  {\cal P}_0(\d \phi)
= \int_{\n_0} h(\phi) \cfpi(\d \phi),
\label {wconv}
\end{eqnarray}
for all bounded and continuous functions $h:\n_0\to \ir$.
\end{rem}

\begin{cor}
Let $\cq$ be a $f$-probability.
Let $I$ be the set defined in (\ref{eq:definf}). 
If for all positive integers $n$, $\phi\to\ind_{m_f^n(\phi)>0}$
is $\cq$-a.s. continuous, then $\cq[I]=1$,
\end{cor}
\begin{proof}
The statement is an immediate consequence of Lemma~\ref{lem:discont.meas.zero}.
\end{proof}
The relative compactness of $A_{\cp_0}$
(and the existence of $\g$-probabilities) is not granted in general. 
The next lemmas give conditions
for this relatively compactness to hold.
From Lemma 4.5. in \cite{Ka76}, one gets:

\begin{lem}\label{kalen}
A necessary and sufficient condition for the set
$A_{\cp_0}$ to be relatively compact in $\m^1(\n^0(\R^d))$ is that
for all bounded Borel subsets $B$ of $\sp$,
\eq{\label{sufcon}
\lim_{r\to\infty}\limsup_{n\to \infty} \cfpn[\phi\in \n^0 \mbox{ s.t. }
\phi(B)>r]=0. }
\end{lem}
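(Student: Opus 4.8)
The plan is to invoke the general relative-compactness criterion for probability measures on the space $\m^1(\n^0(\R^d))$ of counting measures, which is precisely the content cited as Lemma 4.5 in \cite{Ka76}. The key observation is that the relative compactness of the orbit $A_{\cp_0}=\{\cfpn\}_{n\in\bbN}$ in the topology of convergence in distribution is equivalent to the \emph{tightness} of this family of probability measures, together with control of the mass escaping to infinity. On the space of locally finite counting measures equipped with the vague topology, the appropriate notion is that the family is relatively compact if and only if the masses assigned to each bounded Borel set $B$ form a uniformly tight family of real-valued random variables, uniformly over $n$ in the tail.

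First I would recall the structure of the vague topology on $\n^0$ and state the criterion from Kallenberg: a family $\{\mu_n\}$ of laws of point processes is relatively compact (equivalently, tight) if and only if, for every bounded Borel set $B$, the family of random variables $\{\phi(B)\}$ under $\mu_n$ is tight as a family of $[0,\infty)$-valued random variables. The quantitative form of tightness for a \emph{sequence} indexed by $n$, when one only needs relative compactness rather than the existence of a single limit, is exactly the double-limit condition
\eq{\label{planrc}
\lim_{r\to\infty}\limsup_{n\to\infty}\cfpn[\phi(B)>r]=0,}
because the inner $\limsup$ allows finitely many early terms to be discarded (each individual $\cfpn$ is automatically tight as a single probability measure), and the outer limit over $r$ encodes the uniform tightness of the tail of the sequence.

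The two directions would be handled as follows. For \textbf{necessity}, I would argue that if $A_{\cp_0}$ is relatively compact, then every subsequence has a convergent sub-subsequence; a uniform failure of \eqref{planrc} would produce, for some $\epsilon>0$ and some $B$, a subsequence along which $\cfpn[\phi(B)>r]\ge\epsilon$ for all $r$, which contradicts tightness of any limiting law since mass would escape to $+\infty$ on $B$. For \textbf{sufficiency}, I would verify that \eqref{planrc} together with local finiteness yields tightness of the marginals $\phi(B)$ for each fixed $B$ in a countable generating family of bounded sets, and then apply Kallenberg's theorem to conclude relative compactness of the full family on $\n^0$.

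The main obstacle, and the point requiring the most care, is the precise translation between the abstract tightness criterion of \cite{Ka76} and the double-limit condition \eqref{planrc} as written. In particular one must be careful that the $\limsup$ over $n$ (rather than a $\sup$) is the correct quantifier: this is justified by the fact that each $\cfpn$ is individually a probability measure on $\n^0$ and hence already tight, so only the behaviour of the tail of the sequence matters. A secondary technical point is that the criterion must hold \emph{simultaneously} for all bounded Borel $B$; I would reduce this to a countable cover of $\R^d$ by bounded balls and use the standard diagonal/monotonicity argument, noting that controlling $\phi(B)$ on a generating family of bounded sets suffices to control it on every bounded Borel set by monotonicity of $\phi$.
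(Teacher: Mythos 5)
Your proposal is correct and follows essentially the same route as the paper, which proves Lemma~\ref{kalen} simply by citing Kallenberg's relative-compactness criterion (Lemma 4.5 of the reference, restated in the appendix as the equivalence of relative compactness of a sequence of random measures with the condition $\lim_{t\to\infty}\limsup_{n\to\infty}\p[\mu_n B>t]=0$ for all bounded Borel $B$) applied to the sequence $\{\cfpn\}$. Your additional discussion of the two directions and of the role of the $\limsup$ is a sound elaboration of what the paper leaves implicit, not a different argument.
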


Examples of \pomk{} and point process pairs where
the last relative compactness property does
not hold are provided in Subsection \ref{CEPM}.
On stationary point processes,
all the \pomk{}s discussed in Section \ref{sr0} satisfy
this relative compactness property. 
For the periodic cases
(e.g. $c$, $n$ and $g$ on Poisson point processes), the result follows
from Proposition \ref{prop25} below, whereas for the strip \pomk{} $s$, the
proof is given in Subsection \ref{Markovian}. 

The \pomk{} $f$ will be said to have
{\em finite orbits} on the stationary point process 
$(\Phi,\p)$ if for $\ps\fp$-almost all $\phi$, 
$\{\thetg^n(\phi)\}_{n\in \bbN}$ is finite.
\begin{prop}
\label{prop25}
If $f$ has finite orbits
on the stationary point process $(\Phi,\p)$,
then the set $A_{\cp_0}$ is relatively compact.
\end{prop}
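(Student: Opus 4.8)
The plan is to verify the tightness criterion of Lemma \ref{kalen} for the orbit $A_{\cp_0}=\{\cfpn\}_{n\ge 0}$, the whole difficulty being to turn the $\limsup_n$ into something controlled by a single, fixed probability measure. First I would rewrite each $\cfpn$ as a pushforward of the Palm probability $\cp_0=\cfp$: by (\ref{eq:vbas}) one has $\cfpn=(\theta_{g^n})_*\cfp$, where $\theta_{g^n}$ is read as the map $\phi\mapsto\theta_{g^n(\phi)}\phi$ on $\n^0$ (this is exactly Action $2$ applied to $\cfp$). Since $(\theta_t\phi)(B)=\phi(B+t)$ for every $t\in\sp$, this gives, for every bounded Borel set $B$ and every $r>0$,
\eqn{\cfpn[\phi(B)>r]=\cfp\big[\,\phi(B+g^n(\phi))>r\,\big],}
so that the entire $n$-dependence is now carried by the single shift $g^n(\phi)$ inside the fixed measure $\cfp$.

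The key step is to dominate this shift using the finite-orbit hypothesis. For $\cfp$-almost every $\phi$ the origin is a point of $\phi$, and its orbit $O(\phi):=\{g^m(\phi):m\ge 0\}=\{f_\phi^m(0):m\ge 0\}$ is, by assumption, a finite subset of $\phi$. Hence $g^n(\phi)\in O(\phi)$ for every $n$, and
\eqn{\sup_{n\ge 0}\phi(B+g^n(\phi))\le M_B(\phi):=\max_{y\in O(\phi)}\phi(B+y).}
Because $O(\phi)$ is finite and each $\phi(B+y)$ is finite (local finiteness of $\phi$ and boundedness of $B$), one has $M_B(\phi)<\infty$ for $\cfp$-almost every $\phi$. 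Consequently $\cfpn[\phi(B)>r]\le\cfp[M_B>r]$ for all $n$, whence $\limsup_{n\to\infty}\cfpn[\phi(B)>r]\le\cfp[M_B>r]$, and letting $r\to\infty$ the right-hand side tends to $0$ since $M_B$ is $\cfp$-almost surely finite. This is precisely condition (\ref{sufcon}), and relative compactness of $A_{\cp_0}$ then follows from Lemma \ref{kalen}.

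The conceptual heart of the argument—and the only place the hypothesis intervenes—is that finite orbits force $g^n(\phi)$ to range over a fixed finite set for each realization, which decouples the supremum over $n$ from $n$ itself and produces the single dominating variable $M_B$. The remaining points are routine bookkeeping: the identity $\cfpn=(\theta_{g^n})_*\cfp$ is already contained in (\ref{eq:vbas}), and the measurability and a.s. finiteness of $M_B$ are immediate. I expect the only delicate point to be making sure the finite-orbit hypothesis, stated for every point $x\in\phi$, is applied to the origin under $\cfp$ and that $O(\phi)$ is genuinely finite \emph{as a set}; once this is granted the domination is automatic. It is worth noting that this is exactly where evaporation is excluded: if the orbit of the origin were infinite, the family $\{\phi(B+g^n(\phi))\}_n$ would no longer admit a single almost surely finite dominating bound, and the argument would break down.
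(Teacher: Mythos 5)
Your proof is correct and follows essentially the same route as the paper: your dominating variable $M_B(\phi)=\max_{y\in O(\phi)}\phi(B+y)$ is exactly the paper's $R_B(\phi)=\max_n(\theta_{g^n}\phi)(B)$, finite precisely because the orbit of the origin is finite, and both arguments conclude by bounding $\cfpn[\phi(B)>r]$ by the tail of this single $\cfp$-a.s.\ finite variable and invoking Lemma \ref{kalen}. The only cosmetic difference is that the paper phrases the transfer to the fixed measure $\cfp$ via the stochastic monotonicity $R_B(\phi)\ge R_B(\thetg\phi)$, whereas you use the pushforward identity $\cfpn=(\theta_{g^n})_*\cfp$ directly.
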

\begin{proof}
For all bounded Borel subsets $B$ of $\sp$ and $\phi\in\n^0$, let 
$$R_B(\phi):=\max_{n=0}^\infty\lg (\thetg^n\phi)(B)\rg.$$ 
Since $f$ has finite orbits, the RHS is the maximum over finite number of terms and hence $R_B$ is well-defined and finite. Clearly $R_B(\phi)\geq R_B(\thetg \phi)$ and therefore the distribution of the random variable $R_B$ under $\cfpn$ is stochastically decreasing w.r.t. $n$. Hence 
\begin{eqnarray*}
&&\hspace{-1cm}\lim_{r\to\infty}\limsup_{n\to \infty} \cfpn[\phi\in \n^0 \mbox{ s.t. } \phi(B)>r] \\
&\leq&\lim_{r\to\infty}\limsup_{n\to \infty} \cfpn[\phi\in \n^0 \mbox{ s.t. } R_B(\phi)>r]\\
&\leq&\lim_{r\to\infty}\cfp[\phi\in \n^0 \mbox{ s.t. } R_B(\phi)>r]=0.
\end{eqnarray*}
\end{proof}

\begin{rem}
It is easy to check that the following statements are equivalent.
(1) $f$ has finite orbits; (2) $f$  is periodic; and (3)
for $\ps\p$-almost all $\phi$, 
for all $x\in\phi$,
$\{F^n(\phi,x)\}_{n\in \bbN}$ has finitely many different points.
\end{rem}
So, for instance, for
the directional \pomk{} on the random geometric graph $g$,
$A_{\cp_0}$ is relatively compact as this \pomk{} is 1-periodic. 

\subsection{On Palm and \PoMk{}-Probabilities}
This subsection is focused
on the relation between Palm probabilities and \pomk{}-probabilities.
Throughout the subsection, $f$ is a \pomk{}, and $(\Phi,\p)$
is a simple and stationary point process with non-degenerate intensity.
The distribution of $\Phi$ is denoted by $\cal P$ and
its Palm probability by ${\cal P}_0$.

\subsubsection{Conditional Interpretation of the \PoMk{}-Probability}

The next theorem, which immediately follows from
Equation (\ref{cffpan3}), gives
a conditional definition of the $f$-probability from ${\cal P}_0$ :
\begin{thm}
\label{thmloc}
Let $\cal P$ be a simple stationary point process on 
$\sp$.
For all $n\in \mathbb N$ and $\phi\in \n$,
let $m^n_\g(\phi):=m^n_\g(\phi,0)$.
For all $n$, $m^n_\g(\phi)$ is ${\cal P}_0$ a.s. finite.
If there exists a unique $\g$-probability $\cfpi$ for $\cal P$,
then, for all $G$ such that $\cfpn [G]$ tends to $\cfpi[G]$
as $n$ tends to infinity, one has
\begin{eqnarray}
\cfpi [G]
 =  \lim_{n\to \infty}
{\cal E}_0 \left[
\frac {m^n_\g}{{\cal E}_0 [ m^n_\g\mid m^n_\g>0]}
1_{G}
\mid m^n_\g>0\right].
\label {cffpan4}
\end{eqnarray}
\end{thm}
Notice that, 
in addition to the conditioning, there
is a Radon-Nikodym derivative (w.r.t.
${\cal P}_0[\cdot\mid m^n_\g >0$]) equal to
$m^n_\g(\phi)/{\cal E}_0 [ m^n_\g\mid m^n_\g>0].$

\subsubsection{The Periodic Case}
Below, for all stationary point processes $(\Psi,\p)$ defined on
$(\Omega, \cal F)$ with a positive intensity,
$\p_\Psi$ denotes the Palm probability w.r.t. $\Psi$
on $(\Omega, \cal F)$.

\begin{lem}\label{finite.trap}
If $f$ is 1-periodic on the (simple) stationary point
process  $(\Phi,\p)$, then a.s., for all $x\in\Phi$, $\lim_n \Phi^n_f(\{x\})$
exists and is finite. 
\end{lem}

\begin{proof} 
If  $x$ is a trap of $\Phi $, i.e. $F(\Phi,x)=x$,  then
$(\Phi^f_n(x))_{n=1}^\infty$ is non-decreasing in $n$. 
Let $\Psi$ be the thinning of $\Phi$ to traps of $\Phi$
for which the above limit is not finite. 
The compatibility of $F$ implies that $(\Psi,\p)$ 
is a stationary point process. 
If $B$ is the unit box in $\sp$ and  $K$ is a positive integer,
for $n$  large enough, one has 
\eqn{\lambda_\Phi =
\int_\Omega\Phi^n_f(B) \p(\d \omega)\geq
\int_\Omega\Phi^n_f(\Psi\cap B) \p(\d \omega)
\geq \int_\Omega K\Psi(B)\p(\d \omega) =K\lambda_\Psi,}
where $\lambda_\Phi$ and $\lambda_\Psi$ denote the 
intensities of the point processes.
Therefore $\lambda_\Psi\leq\lambda_\Phi/K$, 
which proves that $\lambda_\Psi=0$.
Hence, a.s., at the traps of $\Phi$, the limit exists and is finite.
Given this, it is easy to verify that if $y\in\Phi$ is not a trap,
for $n$ large enough, $\Phi^n_f(y)=0$ and hence the limit
exists for all points of $\Phi$. 
\end{proof}

When $\lim_n \phi^n_f$ exists and is a counting measure,
it is denoted it by $\Phi^\infty_f$.
Hence, in the 1-periodic case, $(\Phi^\infty_f,\p)$ is
well defined and a non-degenerate stationary point process.

\begin{thm}\label{thmcyc1}
If $f$ is 1-periodic on $(\Phi,\p)$,
then the $\g$-probability $\cfpi$ of
$\cp_0=\Phi_* \p_\Phi$
exists and is given by
\eq{\cfpi= \Phi_*\  \p_{\Phi_f^\infty}.}
Let $m^\infty_\g(\Phi)$ denote the multiplicity
of the origin under $\p_{\Phi_f^\infty}$.
Then $\cfpi$ is absolutely continuous
with respect to $\cp_0$, with    
\eq{\label{tleq1}\frac{\d\cfpi}{\d\cp_0}(\phi)=
m^\infty_\g(\phi).}
In addition, $\cfpi= \Thetg\cfpi$.
\end{thm}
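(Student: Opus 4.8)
The statement claims three things when $f=f_g$ is $1$-periodic on $(\Phi,\p)$: first, that the $g$ probability $\cfpi$ exists and equals $\Phi_*\,\p^0_{\fP^\infty\Phi}$; second, the absolute continuity formula (\ref{tleq1}); and third, the invariance $\cfpi=\Thetg\cfpi$. Let me sketch how I would establish each.

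The plan is to begin from the explicit formula (\ref{cffpan}) for the $n$-th $g$-Palm probability $\cfpn$ and pass to the limit $n\to\infty$. Under $1$-periodicity, for $\ps\p$-almost every $\phi$ and every $x\in\phi$, the trajectory $f^n(\phi,x)$ stabilizes after finitely many steps, so $\ffsubph^n\phi$ converges (in the vague topology, with multiplicities) to a well-defined limit counting measure $\ffsubph^\infty\phi=\fP^\infty\Phi$. The key is that the sum $\sum_{t\in(\f^n\phi)\cap\widetilde B}$ appearing in (\ref{cffpan}) stabilizes pointwise in $\phi$: once $n$ is large enough (depending on $\phi$ and $B$), the points of $\f^n\phi$ in $B$, together with their multiplicities, no longer change. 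I would therefore argue that the integrand in (\ref{cffpan}) converges $\cp$-almost everywhere to $\sum_{t\in(\fP^\infty\phi)\cap\widetilde B}\ind\{\theta_t\phi\in G\}$, and then invoke dominated convergence (dominating by the $n=0$ term, which is integrable since it produces $\cfp$, or more carefully by $R_B$ as in the preceding proposition) to conclude that $\cfpn[G]\to\cfpi[G]$ with
\eqn{\cfpi[G]=\frac{1}{\lambda\no B}\int\sum_{t\in(\fP^\infty\phi)\cap\widetilde B}\ind\{\theta_t\phi\in G\}\,\cp[\d\phi].}
This identifies $\cfpi$ as the push-forward under $\Phi$ of the Palm probability of the thinned process $\fP^\infty\Phi$, giving the first claim; note $\fP^\infty\Phi$ has positive intensity precisely because $1$-periodicity excludes evaporation on the relevant points.

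For the absolute continuity formula, I would compare the above display with the Palm formula (\ref{pag}) for $\cp_0$ itself, written over the same set $B$. Since $\fP^\infty\Phi$ is a thinning of $\Phi$ (its support is contained in that of $\Phi$, with the multiplicity of each surviving point $y$ equal to the total multiplicity accumulated at $y$ by all points mapped into the cycle at $y$), the Radon–Nikodym derivative is the ratio of the multiplicity of the origin in $\fP^\infty\Phi$ to its multiplicity in $\Phi$, evaluated under $\cp_0$. This is exactly $\bigl(f_\phi^\infty\phi\bigr)(\{0\})/\phi(\{0\})$, which is $\cp_0$-a.s. well-defined since $\phi(\{0\})\ge 1$ under $\cp_0$. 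The main care here is bookkeeping of multiplicities under the extension of $f$ to non-simple measures, as described in the Preliminaries.

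The invariance $\cfpi=\Thetg\cfpi$ should follow by continuity and the semigroup relation. Having shown $\cfpn\to\cfpi$, I apply $(\theta_g)_*$ and use (\ref{cpntocpn+1}) to get $(\theta_g)_*\cfpn=\cp_0^{g,n+1}\to\cfpi$ as well; if $(\theta_g)_*$ is continuous for the topology in use, the left side converges to $(\theta_g)_*\cfpi$, forcing $(\theta_g)_*\cfpi=\cfpi$, i.e. $\cfpi=\Thetg\cfpi$. The hard part will be justifying this last continuity/limit interchange, since $\theta_g$ need not be continuous on all of $\n^0$; I expect one must instead verify the invariance directly from the integral representation of $\cfpi$, using that $\fP^\infty\Phi$ is a fixed point of $\fP$ (every surviving point maps within the limiting support), so that shifting by $g$ permutes the Palm formula for $\fP^\infty\Phi$ without changing it—this is essentially Mecke's point stationarity (Theorem \ref{point.sta}) applied to the bijective restriction of $f$ to $\fP^\infty\Phi$. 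Pinning down that $f$ acts bijectively on the limiting support, so that the Mecke invariance transfers, is the step I anticipate requiring the most attention.
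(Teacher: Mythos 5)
Your treatment of existence and of the Radon--Nikodym derivative follows the paper's own proof essentially step for step: both pass to the limit in (\ref{cffpan}) using the a.s.\ stabilization of $\fP^n\Phi$ on bounded sets, identify the limit as the distribution of $\Phi$ under the Palm probability of the intensity-preserving thinning $\fP^\infty\Phi$, and obtain (\ref{tleq1}) by a change-of-summation (mass transport) comparison of multiplicities, using $\overline{f^\infty_\Phi\Phi}\subset\overline\Phi$. The one place you diverge --- and precisely the step you flag as ``requiring the most attention'' --- is the invariance $\cfpi=\Thetg\cfpi$, and there you are working much too hard. Under $1$-periodicity the orbit of every point is eventually \emph{constant}, not merely eventually periodic, so the support of $\fP^\infty\phi$ consists exactly of the fixed points of $\f$; under $\cfpi$ the origin is such a fixed point, hence $g\equiv 0$ $\cfpi$-almost surely, $\thetg$ is $\cfpi$-a.s.\ the identity, and the invariance is immediate. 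This is the paper's one-line argument. Neither of your two proposed routes is needed: continuity of $\Thetg$ at $\cfpi$ is not available in general (and is exactly what Theorem \ref{continv} has to assume), while invoking Mecke's Theorem \ref{point.sta} for the ``bijective restriction'' of $f$ to $\fP^\infty\Phi$ would force you to extend that theorem to allocations on a sub-process under the Palm measure of $\Phi$ --- avoidable machinery, since the restriction is in fact the identity map. As stated, your invariance step is left unresolved; with the observation above it closes in one sentence.
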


\begin{proof}
In the 1-periodic case, for all bounded  Borel sets $B$,
$\Phi_f^n(B)$ a.s. coincides with $\Phi_f^\infty(B)$ for $n$ large enough,
so that by letting $n$ to infinity in (\ref{cffpan}),
one gets that for all $G\in {\cal N}$, the limit
\eq{\label {ffpani}
\lim_n
\cfpn[G] = 
\frac 1 {\lambda \no B} 
\int_{\n}
\int_{B}
\mathbf {1} \{\theta_t\phi\in G\}
\phi_f^\infty(\d t)
\cp [\d\phi]}
exists.
Since $\phi_f^\infty$ is a stationary point process with
the same intensity as the original point process
(because of the conservation of intensity),
$\fpi$ is  the distribution
of $\Phi$ with respect to the Palm
distribution of $\Phi_f^\infty$ indeed. In addition, 
for all $H\in {\cal F}$
\begin{eqnarray*}
\p_{\Phi_f^\infty}[H]
&=&\frac 1 {\lambda \no B} 
\int_\Om \int_B
\mathbf {1} \{\theta_t\om\in H\}
\Phi_f^\infty(\om,\d t)
\p [\d\om]\\
&=&\frac 1 {\lambda \no B}
\int_\Om
\int_B
\Phi_f^\infty(\om,\{t\})
\mathbf {1} \{\theta_t\om\in H\}
\Phi(\om,\d t)
\p [\d\om]\\
&=&\frac 1 {\lambda \no B}
\int_\Om
\int_B
\Phi_f^\infty(\theta_t\om,\{0\})
\mathbf {1} \{\theta_t\om\in H\}
\Phi(\om,\d t)
\p [\d\om]\\
&=&\e_\Phi\lk\Phi_f^\infty(\{0\})\mathbf {1}_H(\Phi)\rk=
\e_\Phi\lk m_f^\infty(\Phi)\mathbf {1}_H(\Phi)\rk,
\end{eqnarray*}
where the second equality stems from the facts that 
$\overline{\Phi_f^\infty}\subset\Phi$ and that $\Phi$ is simple.
This proves (\ref{tleq1}) when $H=\Phi^{-1} G$. 
Finally since $f$ is 1-periodic, $\cfpi$-almost surely,  $\g\equiv0$ 
which proves that $\cfpi$ is invariant under the action of $\Thetg$ .
\end{proof}

The \pomk{} $g$ provides an examples where
Theorem \ref{thmcyc1} holds.
See Subsection \ref{CHC}.
Note that similar statements hold in the $p$-periodic case.
In this case,  $f^p$
is 1-periodic on the point processes $\{(\Phi,\cfpn)\}_{n=0}^{p-1}$,
and hence there exists at most $p$ \pomk{}-probabilities.
Details on this fact are omitted.

\subsubsection{The Evaporation Case}
The next theorem shows that
in contrast to Theorem~\ref{thmcyc1} where the $f$-probability is 
absolutely continuous with respect to the Palm probability,
there are cases where the $f$-probability and the Palm probability
are singular. This theorem is based on
the following lemma: 

\begin{lem}\label{infimage}
Let $I$ be the set defined in (\ref{eq:III}).
If $\cq$ is a probability distribution on $\n^0$
which satisfies $\Thetg\cq=\cq$, then $\cq[I]=1$. 
In this case, $\cq$-almost surely, there exists a bi-infinite 
path (which can be a periodic orbit) which passes through the origin;
i.e., $\{y_i=y_i(\phi)\}_{i\in\Z}$ is such that $y_0=0$
and $ \f(\phi,y_i)=y_{i+1}$.
\end{lem}
\begin{proof}
Let $M_n:=\{\phi\in\n^0; \f^{-n}(\phi,0)=\emptyset\}$,
where  $ \f^n(\phi,\cdot)$ is
defined in Subsection  \ref{sec:cpoms-ite}.
It is sufficient to show that, for all $n>0$, $\cq[M_n]=0$.
But the invariance of $\cq$ under the action of $\Thetg$ gives
\begin{eqnarray*}
\cq[M_n] & = & \Thetg^n\cq[M_n]=\cq\lk(\thetg)^{-n}M_n\rk
\\ & = & \cq\lk
\{\phi\in\n^0;\f^{-n}(\phi,\f^n(\phi,0))=\emptyset\}\rk=0.
\end{eqnarray*}
The proof of the second statement is clear if the orbit of $\phi$ is periodic under the action of $\thetg$ and if not, it is  an immediate
consequence of K\"onig's infinity lemma \cite{Ko90}.
\end{proof}

\begin{thm}
\label{theevasin}
If the stationary point process $(\Phi,\p)$ evaporates under the action of $f$, and if the $\g$-probability $\cfpi$
of 
$\cp_0=\Phi_* \p^0_\Phi$
exists and satisfies $\cfpi= \Thetg\cfpi$,
then $\cfpi$ is singular with respect to $\cp_0$. 
\end{thm}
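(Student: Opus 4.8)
The plan is to exhibit a single measurable set that carries all the mass of $\cfpi$ while being $\cp_0$-null; mutual singularity then follows immediately. The natural candidate is the set $I$ of (\ref{eq:III}), i.e.\ the configurations in which the origin admits $\f$-preimages of every order. On the $\cp_0$ side the work is essentially done: since $(\Phi,\p)$ evaporates under $f_g$, Lemma~\ref{eveq} gives $\cp_0[I]=\ps\fp[I]=0$. Thus everything reduces to showing that the invariance hypothesis forces $\cfpi[I]=1$, and this is exactly where the assumption $\cfpi=\Thetg\cfpi$ must be used; I expect this to be the crux of the argument.

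To prove $\cfpi[I]=1$, I would decompose $I=\bigcap_{m\ge 1} I_m$, where
\[
I_m:=\{\phi\in\n^0 \st \exists\, y\in\phi,\ \f^m(y)=0\}
\]
is the event that the origin has an $\f$-preimage of order $m$. These sets are nested, $I_1\supseteq I_2\supseteq\cdots$, because the $\f$-image of a preimage of order $m{+}1$ is a preimage of order $m$. The key observation is that $I_m$ becomes a \emph{sure} event once one applies the shift $\theta_{g^m}$: for every $\phi\in\n^0$ the recentering point $g^m(\phi)=\f^m(0)$ is by construction an order-$m$ image of the origin, so after recentering at $g^m(\phi)$ the new origin carries an order-$m$ preimage. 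Concretely, using the compatibility of $f^m$ (cf.\ (\ref{compf})) one checks that $\theta_{g^m}\phi\in I_m$ for all $\phi$, that is $(\theta_{g^m})^{-1}(I_m)=\n^0$.

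Now I would invoke invariance. From $\cfpi=\Thetg\cfpi$ together with the semigroup relation (\ref{semigroupg}) one obtains $(\theta_{g^m})_*\cfpi=\cfpi$ for every $m$, whence
\[
\cfpi[I_m]=\bigl((\theta_{g^m})_*\cfpi\bigr)[I_m]=\cfpi\bigl[(\theta_{g^m})^{-1}I_m\bigr]=\cfpi[\n^0]=1 .
\]
Since this holds for every $m$ and $I=\bigcap_m I_m$, countable subadditivity applied to the complements gives $\cfpi[I^c]\le\sum_m\cfpi[I_m^c]=0$, i.e.\ $\cfpi[I]=1$. Combined with $\cp_0[I]=0$ from the evaporation step, the disjoint sets $I$ and $\n^0\setminus I$ carry the full mass of $\cfpi$ and of $\cp_0$ respectively, so $\cfpi\perp\cp_0$.

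I would add one remark about why the invariance hypothesis is doing real work here. One might instead try to derive $\cfpi[I]=1$ from the fact that $\cfpn[I_m]=1$ for all $n\ge m$ (valid because under $\cfpn$ the origin lies in $\f^n\phi\subseteq\f^m\phi$) by passing to a subsequential limit along the sequence defining $\cfpi$. That route would require $I_m$ to be closed in the vague topology, which is not at all transparent given that $f$ need not be vaguely continuous and points may merge in the limit. The invariance assumption sidesteps this obstacle completely by replacing a limit argument with a fixed-point identity, which is precisely why it appears among the hypotheses of the theorem.
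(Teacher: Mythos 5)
Your argument is correct and is essentially the paper's own proof: the paper obtains the result by combining Lemma~\ref{eveq} (evaporation $\Leftrightarrow$ $\cp_0[I]=0$) with Lemma~\ref{infimage}, whose proof is exactly your recentering computation, phrased with the complements $M_m=I_m^c$ and the observation that $(\thetg)^{-m}M_m=\emptyset$. Your closing remark on why invariance (rather than a limit along $\cfpn$) is needed matches the role these hypotheses play in the paper; no changes are required.
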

\begin{proof}
The result is obtained when
combining Lemmas~\ref{infimage} and~\ref{eveq}.
\end{proof}
It is shown in Subsection \ref{Markovian} 
that the assumptions of Theorem \ref{theevasin} are satisfied 
by the strip \pomk{} $s$ 
on Poisson point processes in ${\mathbb R}^2$.

\begin{rem}
The case with evaporation is that where the conditioning representation
given in Equation (\ref{cffpan4}) is w.r.t. an event whose probability
w.r.t. ${\cal P}_0$ tends to 0 as $n$ tends to infinity.
\end{rem}
\begin{rem}
The singularity property established in Theorem \ref{theevasin}
can be completed by the following observation:
under the assumptions of this theorem,
there is no finite and measurable $U=U(\phi)\in \phi$ (resp. 
$V=V(\phi)\in \phi$) such that
$\cfpi = (\theta_U)_* {\cal P}$,
(resp. $\cfpi = (\theta_V)_* {\cal P}_0$),
i.e., there is no {\em shift-coupling} giving $\cfpi$ as function
of ${\cal P}$ (resp. ${\cal P}_0$). The proof is by
contradiction: evaporation implies that
${\cal P}$ (resp. ${\cal P}_0$) a.s.,
$\theta_x \phi\notin I$ for all $x\in \phi$.
But this together with $\cfpi = (\theta_U)_* {\cal P}$
(resp. $\cfpi = (\theta_V)_* {\cal P}_0$)
imply that $\cfpi[I]=0$, which contradicts the fact that,
under the assumptions of Theorem \ref{theevasin}, $\cfpi[I]=1$.
\end{rem}

\subsection{Mecke's Point-Stationarity Revisited}
\subsubsection{Mecke's Invariant Measure Equation}
Consider the following \pomk{} invariant measure equation
\eq{\label{inveq}\Thetg\cq=\cq,}
where the unknown is $\cq\in M^1(\n^0)$.
From Mecke's point stationarity Theorem \ref{point.sta},
if $\thetg$ (or equivalently $F$) is bijective,
then the Palm probability $\cp_0$ of any simple stationary point process
solves (\ref{inveq}). From Theorem \ref{thmcyc1},
if $f$ is 1-periodic on $(\Phi,\p)$, then the $f$-probability 
of $\Phi$ exists and from the last statement of this theorem,
it satisfies (\ref{inveq}).
More precisely, a solution to (\ref{inveq}) was built
from the Palm probability ${\cal P}_0$ of $\Phi$ by Equation (\ref{tleq1}).

Equation (\ref{inveq}) will be referred to as 
{\em Mecke's invariant measure equation}.
The bijective case shows that the solution of (\ref{inveq}) 
is not unique in general (all Palm probabilities are solution).

A natural question is whether one can
construct a solution of (\ref{inveq}) from the Palm probability
of a stationary point process beyond
the bijective and the 1-periodic cases, for instance when
$\Phi$ evaporates under the action of $f$.

Consider the Ces\`aro sums
\eq{\label{cesdef} \ccfpn:=\frac 1n \sum_{i=0}^{n-1}\cp_0^{\g,i}, \quad n\in 
\bbN.}
When the limit of $\ccfpn$ as $n$ tends to infinity exists
(w.r.t. the topology of $\m^1(\n^0)$), let
\eq{\label{cesdefinfty} \ccfpi:=\lim_{n\to \infty}\frac
1n \sum_{i=0}^{n-1}\cp_0^{\g,i}.}
In general,  $\ccfpi$ is not a $\g$-probability.  

\begin{thm}\label{continv}
Assume there exists a subsequence  $(\widetilde\cp_0^{\g,n_i})_{i=1}^\infty$ which converges to a probability measure $\ccfpi$. If $\Thetg$
is continuous at $\ccfpi $, then $\ccfpi$ solves Mecke's invariant
measure equation (\ref{inveq}).
\end{thm}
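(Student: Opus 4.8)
The plan is to run the classical Krylov--Bogolyubov averaging argument adapted to the semigroup $\Thetg$. First I would apply $\Thetg$ to the Ces\`aro average $\ccfpn$. Since $\Thetg=(\theta_g)_*$ is a pushforward, it is linear in the measure, so
\[\Thetg \ccfpn = \frac 1n \sum_{i=0}^{n-1} \Thetg \cp_0^{g,i} = \frac 1n \sum_{i=0}^{n-1} \cp_0^{g,i+1},\]
where the last equality is exactly the shift relation (\ref{cpntocpn+1}). Subtracting $\ccfpn$ from this, the two sums telescope and all but the extreme terms cancel.

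Second, I would record the outcome of the telescoping, namely
\[\Thetg \ccfpn - \ccfpn = \frac 1n\lp \cp_0^{g,n} - \cp_0\rp,\]
using $\cp_0^{g,0}=\cp_0$. Both $\cp_0^{g,n}$ and $\cp_0$ are probability measures, so their difference has total variation at most $2$; hence the right-hand side has total variation at most $2/n$ and tends to the zero measure. In particular, for every bounded continuous function $h$ on $\n^0$,
\[\left|\int h\, \d(\Thetg\ccfpn) - \int h\, \d\ccfpn\right|\le \frac{2}{n}\,\|h\|_\infty \to 0.\]

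Third, I would pass to the limit along the convergent subsequence. By assumption $\widetilde\cp_0^{g,n_i}\to \ccfpi$ in $\m^1(\n^0)$, and by the continuity of $\Thetg$ at $\ccfpi$ one also has $\Thetg\widetilde\cp_0^{g,n_i}\to\Thetg\ccfpi$. Testing against an arbitrary bounded continuous $h$ and combining these two convergences with the vanishing of the difference from the second step yields
\[\int h\, \d(\Thetg\ccfpi) - \int h\, \d\ccfpi = \lim_{i\to\infty}\left(\int h\, \d(\Thetg\widetilde\cp_0^{g,n_i}) - \int h\, \d\widetilde\cp_0^{g,n_i}\right) = 0.\]
Since $h$ is an arbitrary bounded continuous function and such functions separate Borel probability measures on $\n^0$, we conclude $\Thetg\ccfpi=\ccfpi$, which is Mecke's invariant measure equation (\ref{inveq}).

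The only genuinely delicate point is the last step, where the continuity hypothesis on $\Thetg$ at $\ccfpi$ is indispensable: the telescoping and the $2/n$ bound are purely formal, but to convert ``$\Thetg\ccfpn-\ccfpn\to 0$ together with $\widetilde\cp_0^{g,n_i}\to\ccfpi$'' into ``$\Thetg\ccfpi=\ccfpi$'' one must know that $\Thetg$ sends the limit of the subsequence to the limit of its images, which is precisely \emph{continuity} at $\ccfpi$. I would also remark that the $2/n$ estimate holds in total variation, a topology stronger than the weak one used for the limits, so no compatibility issue arises between the two modes of convergence.
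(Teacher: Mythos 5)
Your proof is correct and follows essentially the same route as the paper: the same telescoping identity $\Thetg\ccfpn-\ccfpn=\frac 1n\lp\cp_0^{g,n}-\cp_0\rp$ derived from (\ref{cpntocpn+1}), followed by passing to the limit along the subsequence and invoking continuity of $\Thetg$ at $\ccfpi$. Your explicit total-variation bound of $2/n$ is a small additional justification of why the difference vanishes weakly, but it does not change the argument.
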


\begin{proof}
From (\ref{cpntocpn+1}),
\begin{eqnarray}\label{inv.thet.ces}
\Thetg\ccfpn-\ccfpn&=&\hspace{-.2cm}\frac 1n\lp\sum_{i=0}^{n-1}\Thetg\cp_0^{\g,i}-\sum_{i=0}^{n-1}\cp_0^{\g,i}\rp\nonumber\\
&=&\hspace{-.2cm}\frac 1n\lp\sum_{i=1}^n\cp_0^{\g,i}-\sum_{i=0}^{n-1}\cp_0^{\g,i}\rp=\frac 1n\lp\cfpn-\cfp\rp.
\end{eqnarray}
Therefore, if the subsequence  $(\widetilde\cp_0^{f,n_i})_{i=1}^\infty$  converges in distribution w.r.t. the vague topology to a probability measure $\ccfpi$, then  (\ref{inv.thet.ces}) implies that the sequence $(\Thetg\widetilde\cp_0^{f,n_i})_{i=1}^\infty$ converges to $\ccfpi$ as well. Now the continuity of $\Thetg$ at $\ccfpi$ implies that $(\Thetg\widetilde\cp_0^{f,n_i})_{i=1}^\infty$ converges to $\Thetg\ccfpi$ and therefore $\Thetg\ccfpi=\ccfpi.$
\end{proof}

\begin{rem}
Here are some comments on the last theorem:
\begin{enumerate}
\item A sufficient condition for the existence of
a converging
subsequence in Theorem \ref{continv} is the relative compactness
condition of Lemma \ref{kalen}.
\item When the sequence $(\cfpn)_{n=1}^\infty$ 
converges to $\cfpi$, then $(\ccfpn)_{n=1}^\infty$
converges to $\cfpi$ too, and hence 
Theorem \ref{continv} implies the invariance of
the $\g$-probability $\cfpi$ under the action of $\Thetg$,
whenever  $\Thetg$ has the required continuity.
\item
If instead of $(\ccfpn)_{n=1}^\infty$, $(\cfpn)_{n=1}^\infty$
has convergent subsequences with different limits, i.e., if the set of
$\g$-probabilities is not a singleton,
then none of the $\g$-probabilities satisfies (\ref{inveq}).
However, it follows from Lemma \ref{lemcherche} in the appendix
that if $(\theta_\g)_*$ is continuous, and if $(\cfpn)_{n=1}^\infty$ is
relatively compact, then the set of $\g$-probabilities of $\cp_0$ is
compact, non empty and $(\theta_\g)_*$-invariant.
\item
The conditions listed in Theorem \ref{continv} are all
required.
There exist \pomk{}s $f$ such that $(\ccfpn)_{n=1}^\infty$
has no convergent subsequence (see Subsection \ref{CEPM});
there also  exist \pomk{}s $f$ such that
$(\cfpn)_{n=1}^\infty$ is convergent, but $\Thetg$
is not continuous at the limit and $\cfpi$ is not
invariant under the action of $\Thetg$ (see Subsection \ref{sr}).
The use of Ces\`aro limits is required too as there
exist \pomk{}s $f$ such that
$(\cfpn)_{n=1}^\infty$ is not convergent,
whereas $(\ccfpn)_{n=1}^\infty$ converges to a limit
which satisfies (\ref{inveq}) (see Subsection \ref{nonuniqueconv.sub.}).
\end{enumerate}
\end{rem}
\subsubsection{Continuity Condition}
In  case of  existence of $\ccfpi$,
Theorem~\ref{continv} gives a sufficient condition for $\ccfpi$
to solve (\ref{inveq}); however since $\ccfpi$ lives 
in the space of probability measures on counting measures,
the verification of the continuity of $\Thetg$ at $\ccfpi$ 
can be difficult. The following  propositions give more handy
tools to verify the continuity criterion.

\begin{prop}\label{f.cont}
If $\thetg$ is $\ccfpi$-a.s. continuous,
then $\Thetg$ is continuous at $\ccfpi$.  
\end{prop}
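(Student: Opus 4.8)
The plan is to recognize this statement as an instance of the continuous mapping theorem for weak convergence of probability measures, applied to the point-shift $\thetg\colon\n^0\to\n^0$, $\thetg\phi=\theta_{g(\phi)}\phi$. Since $\n^0$ is Polish under the vague topology (cf. Subsection~\ref{RandMeas}), the space $\m^1(\n^0)$ equipped with the topology of convergence in distribution is metrizable; hence continuity of $\Thetg$ at $\ccfpi$ is equivalent to the sequential statement that $\cq_n\tow\ccfpi$ implies $\Thetg\cq_n\tow\Thetg\ccfpi$ for every sequence $\{\cq_n\}$ in $\m^1(\n^0)$. This is what I would establish.

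First I would reduce the claim to a statement about integrals. By the definition \eqref{indmeasu} of the pushforward, for every bounded continuous $F\colon\n^0\to\R$ one has $\int_{\n^0}F\,\d(\Thetg\cq)=\int_{\n^0}(F\circ\thetg)\,\d\cq$. Thus, to prove $\Thetg\cq_n\tow\Thetg\ccfpi$ it suffices, by the Portmanteau theorem, to show that $\int(F\circ\thetg)\,\d\cq_n\to\int(F\circ\thetg)\,\d\ccfpi$ for every bounded continuous $F$. The key observation is that the set $D_{F\circ\thetg}$ of discontinuity points of $F\circ\thetg$ is contained in the set $D_{\thetg}$ of discontinuity points of $\thetg$ (composing with the continuous $F$ creates no new discontinuities), and the hypothesis that $\thetg$ is $\ccfpi$-almost surely continuous means exactly that $\ccfpi[D_{\thetg}]=0$, whence $\ccfpi[D_{F\circ\thetg}]=0$.

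Next I would invoke the $\mu$-almost-everywhere continuous version of the continuous mapping theorem: if $\cq_n\tow\ccfpi$ and $\psi$ is a bounded measurable function whose set of discontinuity points is $\ccfpi$-null, then $\int\psi\,\d\cq_n\to\int\psi\,\d\ccfpi$. Applying this with $\psi=F\circ\thetg$ yields the desired convergence of integrals, and therefore $\Thetg\cq_n\tow\Thetg\ccfpi$. Since the sequence $\{\cq_n\}$ was arbitrary, $\Thetg$ is continuous at $\ccfpi$.

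The main obstacle is not the measure-theoretic argument, which is classical once set up, but rather verifying that the ambient topological framework supports it: one must confirm that $\n^0$ under the vague topology is separable and metrizable, so that the extended Portmanteau/mapping theorem applies and so that continuity of $\Thetg$ reduces to sequential continuity. These facts are recorded in the appendix, so the bulk of the routine work is bookkeeping about the vague topology; the substantive steps are the inclusion $D_{F\circ\thetg}\subset D_{\thetg}$ together with the remark that $\ccfpi$-almost sure continuity of $\thetg$ is precisely the null-discontinuity-set hypothesis required by the mapping theorem.
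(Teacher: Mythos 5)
Your argument is correct and is essentially the paper's own proof: the paper simply cites Proposition~\ref{cont.m.cont} (Billingsley's mapping theorem, that $h_*$ is weakly continuous at $\p$ when $h$ is $\p$-a.s.\ continuous) together with the fact that $\n(\sp)$ is Polish, and your write-up just unpacks the standard proof of that cited result via the change of variables $\int F\,\d(\Thetg\cq)=\int (F\circ\thetg)\,\d\cq$, the inclusion $D_{F\circ\thetg}\subset D_{\thetg}$, and the extended Portmanteau theorem. No gaps.
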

\begin{proof}
The proof is an immediate consequence of
Proposition~\ref{cont.m.cont} in the appendix,
as the space $\n(\sp)$ is a Polish space. 
\end{proof}

\begin{prop}\label{s.cont}
If $\g$ is $\ccfpi$-almost surely continuous,
then $\Thetg$ is $\ccfpi$-continuous.  
\end{prop}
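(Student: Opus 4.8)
The plan is to derive Proposition~\ref{s.cont} from Proposition~\ref{f.cont}. Since Proposition~\ref{f.cont} already guarantees that $\Thetg$ is continuous at $\ccfpi$ as soon as the map $\thetg\colon\n^0\to\n^0$, $\phi\mapsto\theta_{g(\phi)}\phi$, is $\ccfpi$-almost surely continuous, it suffices to prove the pointwise implication that, at every $\phi\in\n^0$ at which the point-map $g$ is continuous, the map $\thetg$ is also continuous. Indeed, the set of continuity points of $g$ has $\ccfpi$-measure one by hypothesis, so this implication transfers the almost sure continuity from $g$ to $\thetg$, after which Proposition~\ref{f.cont} closes the argument.

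To prove the pointwise implication, I would first record the joint continuity of the shift action. Let $S\colon\n\times\sp\to\n$ be defined by $S(\phi,t):=\theta_t\phi$. With the vague topology on $\n$ and the Euclidean topology on $\sp$, the map $S$ is jointly continuous: if $\phi_k\to\phi$ vaguely and $t_k\to t$, then for every continuous compactly supported test function $h$ one has $\int h\,\d(\theta_{t_k}\phi_k)=\int h(\,\cdot-t_k)\,\d\phi_k$, and since $h(\,\cdot-t_k)\to h(\,\cdot-t)$ uniformly with supports contained in a fixed compact set, the vague convergence of $\phi_k$ yields convergence of these integrals to $\int h(\,\cdot-t)\,\d\phi=\int h\,\d(\theta_t\phi)$. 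This is the standard fact that translations act continuously on locally finite measures; the only care needed is that the test functions move in a uniformly controlled way, which is precisely where the compact support of $h$ and the convergence $t_k\to t$ enter.

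Now I would write $\thetg=S\circ(\id,g)$, where $(\id,g)\colon\n^0\to\n^0\times\sp$ sends $\phi$ to $(\phi,g(\phi))$. If $g$ is continuous at $\phi$, then $(\id,g)$ is continuous at $\phi$ (its first component is always continuous), and composing with the jointly continuous $S$ shows that $\phi\mapsto\theta_{g(\phi)}\phi$ is continuous at $\phi$. The image indeed lies in $\n^0$, because $g(\phi)$ is by definition a point of $\phi$, so $0\in\theta_{g(\phi)}\phi$; since $\n^0$ carries the subspace topology inherited from $\n$, continuity into $\n$ is continuity into $\n^0$. This establishes the pointwise implication and hence, via Proposition~\ref{f.cont}, the proposition.

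I expect the main obstacle to be the careful justification of the joint continuity of $S$ in the vague topology and the attendant bookkeeping with the subspace $\n^0$ (in particular, confirming that $\n^0$ is endowed with the subspace topology, so that a map continuous into $\n$ and taking values in $\n^0$ is continuous into $\n^0$). The remainder is a routine composition-of-continuous-maps argument together with the measure-theoretic transfer of almost sure continuity already packaged in Proposition~\ref{f.cont}.
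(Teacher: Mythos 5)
Your proposal is correct and follows essentially the same route as the paper: the paper also factors $\thetg$ as the composition of the jointly continuous shift $(t,\phi)\mapsto\theta_t\phi$ with the map $\phi\mapsto(g(\phi),\phi)$, concludes that $\thetg$ is continuous at every continuity point of $g$, and then passes to $\Thetg$ via Proposition~\ref{f.cont}. The only difference is that you spell out the test-function verification of the joint continuity of the shift, which the paper leaves as ``one can verify.''
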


\begin{proof}
One can verify that  $\theta:\sp\times\n\to\n$ defined by $\theta(t,\phi)=\theta_t\phi$ is continuous. Also $h:\n^0\to\sp\times\n$ defined by $h(\phi)=(f(\phi),\phi)$ is continuous at continuity points of $f$ in $\n^0$. Hence $\thetg=\theta\circ h$ is continuous at continuity points of $\g$. 
\end{proof}
The converse of the statement of Proposition~\ref{s.cont} does not hold in 
general (see Subsection \ref{CEPM}). 
Combining the last propositions and Theorem \ref{continv} gives:

\begin{cor}
If the limit $\ccfpi$ defined in (\ref{cesdefinfty}) exists and
if in addition $\g$ is $\ccfpi$-almost surely continuous,
then $\Thetg$ is continuous at $\ccfpi$, and $\ccfpi$ then
solves Mecke's invariant measure equation (\ref{inveq}).
\end{cor}

In Theorem \ref{continv} and the last propositions,
the continuity of the mapping $\Thetg$ 
is required at some specific point only.
The continuity of $f$
is a stronger requirement which does not hold for most interesting cases as
shown by the following proposition
(see Appendix \ref{seccontpos} for a proof).

\begin{prop}\label{contpos}
For $d\geq 2$, there is no continuous \pomk{} on the whole
space $\n^0$ other than the point-map of the identity \pom{};
i.e., the \pomk{} which maps all $\phi\in\n^0$ to the origin.  
\end{prop}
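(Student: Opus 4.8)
The plan is to prove the contrapositive-flavored statement directly: assuming $g$ is continuous on all of $\n^0$, I show that $g(\phi)=0$ for every $\phi$. The mechanism is a continuous deformation of a configuration back to itself that, if $g$ ever selected a non-origin point, would be forced to move the selected point to a \emph{different} point of the same configuration. The one place the hypothesis $d\ge 2$ enters is the ability to interchange two points of a configuration by a continuous collision-free motion (and, in the two-point case, the connectedness of $\sp\setminus\{0\}$); both fail for $d=1$, where nontrivial continuous point-maps do exist. I would first record a \emph{single-branch lemma}: if $b_1,\dots,b_m:[0,1]\to\sp$ are continuous with $b_i(t)\neq b_j(t)$ for $i\neq j$ and all $t$, and $\gamma:[0,1]\to\sp$ is continuous with $\gamma(t)\in\{b_1(t),\dots,b_m(t)\}$ for every $t$, then $\gamma\equiv b_i$ for a single $i$. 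Indeed each $\{t:\gamma(t)=b_i(t)\}$ is closed, and it is also open because the $b_j(t)$ are separated and $\gamma,b_j$ are continuous; connectedness of $[0,1]$ then leaves exactly one of them nonempty.

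Next comes the main \emph{swap argument}, for $\phi^*$ whose support $\ol{\phi^*}$ has at least three points. Suppose for contradiction that $g(\phi^*)=x^*$ with $x^*\in\ol{\phi^*}$, $x^*\neq 0$, and pick $z\in\ol{\phi^*}\setminus\{0,x^*\}$. Choose a ball $B$ containing $x^*$ and $z$; it meets only finitely many points of $\phi^*$ by local finiteness. Because $d\ge 2$, the ordered configuration space of two distinct points in $B$ with those finitely many other points (including the origin, if in $B$) removed is path connected, so there is a continuous path $(p_t,q_t)_{t\in[0,1]}$ with $(p_0,q_0)=(x^*,z)$, $(p_1,q_1)=(z,x^*)$, $p_t\neq q_t$, and $p_t,q_t$ avoiding $0$ and all other points of $\phi^*$. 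Let $\phi_t$ be $\phi^*$ with $x^*,z$ replaced by $p_t,q_t$ and everything else (including the origin) frozen; this is a vaguely continuous loop $\phi_0=\phi_1=\phi^*$ in $\n^0$. Then $\gamma(t):=g(\phi_t)$ is continuous with compact image, so it meets only finitely many frozen points. Applying the single-branch lemma to the branches $t\mapsto 0$, $t\mapsto p_t$, $t\mapsto q_t$ and the finitely many relevant frozen points, and using $\gamma(0)=x^*=p_0$, forces $\gamma\equiv p_{\scriptscriptstyle\bullet}$; hence $g(\phi^*)=\gamma(1)=p_1=z\neq x^*$, a contradiction. Thus $g(\phi^*)=0$ whenever $\ol{\phi^*}$ has at least three points.

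The remaining cases are easy. If $\ol\phi=\{0\}$ then $g(\phi)=0$ is the only option. If $\ol\phi=\{0,x\}$, I would set $h(x):=g(\delta_0+\delta_x)\in\{0,x\}$ on $\sp\setminus\{0\}$, which is continuous; the separation of $0$ and $x$ makes both $\{h=0\}$ and $\{h(x)=x\}$ open, so one is empty by connectedness of $\sp\setminus\{0\}$ (again using $d\ge 2$). Since $\delta_0+\delta_x\to 2\delta_0$ vaguely as $x\to 0$ and $g(2\delta_0)=\ol g(\delta_0)=0$, continuity gives $h(x)=0$ for small $x$, so $\{h=0\}\neq\emptyset$ and therefore $h\equiv 0$. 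Combining the three cases yields $g\equiv 0$. The only delicate points, which I would check carefully, are the construction of the collision-free swap path and the verification that the induced deformation is vaguely continuous and that, thanks to compactness of $\gamma([0,1])$ and local finiteness, only finitely many frozen branches are ever relevant; the remainder is the connectedness bookkeeping above.
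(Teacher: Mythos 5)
Your overall strategy is the same as the paper's: deform the configuration along a closed loop in $\n^0$ that interchanges the selected point with a third point while freezing everything else (in particular the origin), and use a clopen argument on $[0,1]$ to force $g(\phi_t)$ to follow a single strand, so that at $t=1$ it lands on the wrong point of $\phi^*$. Your write-up is in fact more careful than the paper's: the single-branch lemma, and the reduction to finitely many frozen branches via compactness of $\gamma([0,1])$ together with local finiteness, make explicit exactly the step the paper only asserts (``the continuity of $g$ implies $g(\Gamma(t))=\gamma_1(t)$''). The swap path's existence via connectedness of the punctured two-point configuration space for $d\ge 2$ is at the same level of rigor as the paper's curve construction.

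There is, however, one step that fails as written, in the two-point case. You argue that since $\delta_0+\delta_x\to 2\delta_0$ vaguely as $x\to 0$ and $g(2\delta_0)=0$, continuity gives $h(x)=0$ for small $x$. Continuity only gives $h(x)\to 0$; but $h(x)\in\{0,x\}$ with $x\to 0$, so the alternative $h(x)=x$ also tends to $0$, and the limit at $2\delta_0$ cannot distinguish $h\equiv 0$ from $h\equiv\mathrm{id}$ --- and $h\equiv\mathrm{id}$ is precisely the possibility you must exclude (your clopen argument correctly shows it is the only other option). The repair is to send $x$ to infinity instead, as the paper does: then $\delta_0+\delta_x\to\delta_0$ vaguely, so continuity forces $h(x)\to 0$, which is incompatible with $h(x)=x$ for large $\norm{x}$; hence $\{x:h(x)=0\}$ is nonempty and your connectedness argument yields $h\equiv 0$. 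With that one-line change the proof is complete.
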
 

\subsubsection{Regeneration}
In certain cases, the existence of $\cfpi$ can be established using the theory of regenerative processes \cite{As03}. This method can be used  when the point process satisfies the \emph{strong Markov property} such as Poisson point processes \cite{Mo05}.

Assume $\g$ is a fixed \pomk{} and $(\Phi,\fp)$ is the Palm version of a stationary point process. 
For $n\geq 0$
let 
\begin{equation}
\label{eq:X}
\xfai=\xfai(\g,\Phi)=f^n(\Phi)\in \sp,
\end{equation}
where $f^n$ is defined in (\ref{nthpom}). Note that $\fp$-almost surely, $\Phi\in\n^0$ and hence $X_n$ is well defined. 
Finally, denote $\theta_{\xfai}\Phi$ by $\Pn$
(this point process should not be confused with
$\Phi^n_{f}$ defined in Subsection \ref{sec:cpoms-mul})
and by $\Prn$ the restriction of $\Pn$ to the sphere of radius $r$
centered at the origin. Using this notation,
Lemma~\ref{convfrom0} gives $(\theta_{\xfai})_*\cfp=\cfpn$ or 
equivalently $(\Pn)_*\fp=\cfpn$.

The following theorem leverages  classical results in the theory of regenerative processes.  

\begin{thm}\label{regenver}
If, for all $r>0$, there exists a strictly increasing sequence of non-lattice integer-valued random variables
$(\eta_i)_{i=1}^\infty$, which may depend on $r$, such that 
\begin{enumerate}
\item $(\eta_{i+1}-\eta_i)_{i=1}^\infty$ is a sequence of i.i.d. random variables with finite mean,
\item the sequence $Y_i:=(\Phi_{\eta_i}^r,\Phi_{\eta_i+1}^r, \ldots, \Phi_{\eta_{i+1}-1}^r)$ is an i.i.d. sequence and $Y_{i+1}$ is independent of $\eta_1,\ldots,\eta_i$, 
\end{enumerate}
then the $\g$-probability $\cfpi$ exists and, for all bounded and measurable
functions $h$ and for $\cfp$-almost all $\phi$, 
\begin{equation}\label{regeneq}
 \lim_{k\to\infty} \frac 1 k \sum_{n=0}^{k-1} h(\thetg^n\phi) = 
\int_{\n^0} h(\psi) \cfpi(\d\psi). 
\end{equation}
If in addition, for all $n$, $\g$ is $\cfpn$-almost  surely continuous, then $\cfpi$ is invariant under the action of $\Thetg$ and $\thetg$ is ergodic on $(\n^0,\N^0,\cfpi)$.    
\end{thm}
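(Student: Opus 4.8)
The plan is to read hypotheses (1)--(2) as the statement that, for each fixed radius $r$, the $\n(B_r)$-valued sequence $\{\Prn\}_{n\ge0}$ is a classical discrete-time \emph{regenerative} process, and then to invoke the two workhorses of regenerative theory (see \cite{As03}): the renewal--reward strong law and, in the non-lattice case, the renewal (key renewal / Blackwell) theorem. Indeed, the epochs $(\eta_i)$ cut $\{\Prn\}$ into i.i.d.\ cycles $Y_i$ separated by i.i.d.\ non-lattice inter-epoch times of finite mean $\mu:=\e[\eta_2-\eta_1]$, which is exactly the regeneration hypothesis. The renewal theorem then delivers, for every $r$, a genuine limit in distribution $\Prn\Rightarrow\nu_r$ on $\n(B_r)$; it is the non-lattice assumption that rules out periodicity and produces an honest limit rather than a mere Ces\`aro limit, which is what will make $\cfpi$ \emph{the} (unique) $g$ probability of Definition~\ref{deffond} rather than one of several $\omega$-limits.

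First I would assemble $\cfpi$ from the family $\{\nu_r\}_r$. Because restriction to $B_r$ is continuous for the vague topology (at radii carrying no limiting mass on the sphere) and commutes with the dynamics, the laws $\nu_r$ form a consistent family under further restriction, and by the standard criterion for weak convergence of random measures they determine a unique probability $\cfpi$ on $(\n^0,\N^0)$ with $\Pn=\thetg^n\Phi\Rightarrow\cfpi$. The origin lies in every $\Pn$ and this is preserved in the vague limit, so $\cfpi$ charges $\n^0$. Since Lemma~\ref{convfrom0} gives $(\thetg^n)_*\cfp=\cfpn$, i.e.\ $\cfpn$ is the law of $\Pn$ under $\cfp$, this convergence reads $\cfpn\to\cfpi$, establishing existence of the $g$ probability.

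Next I would prove the ergodic average \ref{regeneq}. For $h$ bounded and \emph{local} (measurable w.r.t.\ $B_r$ for some $r$), applying the renewal--reward law to the cycle functional $\sum_{n=\eta_i}^{\eta_{i+1}-1}h(\Prn)$ gives \[ \frac1k\sum_{n=0}^{k-1}h(\thetg^n\phi)\ \longrightarrow\ \frac1\mu\,\e\Big[\textstyle\sum_{n=\eta_1}^{\eta_2-1}h(\Prn)\Big]\quad\cfp\text{-a.s.}, \] and the classical cycle formula identifies the right-hand side with $\int h\,\d\nu_r=\int h\,\d\cfpi$, so \ref{regeneq} holds for local $h$. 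I expect the \textbf{main obstacle} to be the passage to arbitrary $h\in L_\infty(\cfpi)$: approximating $h$ in $L_1(\cfpi)$ by local $h_\varepsilon$ handles the averaged and the constant terms, but the residual time-average of $|h-h_\varepsilon|$ is \emph{not} itself local, so a naive density argument does not close. I would instead dominate the residual by applying the renewal--reward law to the nonnegative \emph{local} functions $\e_{\cfpi}[\,|h|\mid{\cal F}_r]$ obtained by conditioning on the $B_r$-field, use the martingale convergence $\e_{\cfpi}[\,|h|\mid{\cal F}_r]\to|h|$ in $L_1(\cfpi)$, and pass to the limit in $r$ by truncation and uniform integrability; this is where the bulk of the work sits.

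Finally, under the additional $\cfpn$-a.s.\ continuity of $g$ for all $n$, let $D\subset\n^0$ be the discontinuity set of $g$. Applying \ref{regeneq} to $h=\ind_D\in L_\infty(\cfpi)$ gives $\frac1k\sum_{n<k}\ind_D(\thetg^n\phi)\to\cfpi[D]$ $\cfp$-a.s.; but $(\thetg^n)_*\cfp=\cfpn$ and $\cfpn[D]=0$ force each summand to vanish $\cfp$-a.s., so the limit is $0$ and $\cfpi[D]=0$. Thus $g$ is $\cfpi$-a.s.\ continuous and Proposition~\ref{s.cont} makes $\Thetg$ continuous at $\cfpi$; since $\cfpn\to\cfpi$ while $\Thetg\cfpn=\cp_0^{g,n+1}\to\cfpi$ by \ref{cpntocpn+1}, continuity forces $\Thetg\cfpi=\cfpi$, exactly as in the remark after Theorem~\ref{continv}. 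For ergodicity of $\thetg$ on $(\n^0,\N^0,\cfpi)$ I would use the triviality of the tail field of the i.i.d.\ cycle structure, or argue directly from \ref{regeneq}: for a $\thetg$-invariant set $A$ one has $\ind_A(\thetg^n\phi)=\ind_A(\phi)$, so the left side of \ref{regeneq} with $h=\ind_A$ equals $\ind_A(\phi)$ and the right side equals $\cfpi[A]$, forcing $\ind_A$ to be $\cfp$-a.s.\ constant and hence $\cfpi[A]\in\{0,1\}$. The two genuine difficulties are thus the $L_\infty$ extension of \ref{regeneq} and (secondarily) the transfer of a.s.\ continuity from the $\cfpn$ to the limit $\cfpi$.
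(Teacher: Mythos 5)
Your proposal follows essentially the same route as the paper's proof: treat the restricted processes $\Prn$ as classical regenerative processes, obtain the radius-$r$ limit laws from the non-lattice renewal theorem, assemble $\cfpi$ by Kolmogorov consistency, derive (\ref{regeneq}) from the renewal--reward/cycle formula, prove $\cfpi$-a.s.\ continuity of $g$ by applying (\ref{regeneq}) to the discontinuity indicator and conclude invariance via Proposition~\ref{s.cont}, with ergodicity coming from the i.i.d.\ cycle structure. The only point where you go beyond the paper is the extension of (\ref{regeneq}) from local functions to all of $L_\infty(\cfpi)$, which the paper dispatches in one sentence (``$r$ varies in integers''); your conditioning/martingale-convergence argument addresses a step the paper leaves implicit, and is a welcome refinement rather than a different approach.
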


\begin{proof}
In order to prove the weak convergence of $\cfpn$ to $\cfpi$, it is sufficient to show the convergence in all balls of integer radius $r$ around the origin. Note that $\cfpn$ is the distribution of $\Pn$ and hence, to prove the existence of $\cfpi$, it is sufficient to prove the convergence of the distribution of $\Prn$ for all $r\in \Bbb N$.

Note that the sequence $(\eta_i)_{i=1}^\infty$ forms a sequence of regenerative times for the configurations  in $B_r(0)$. Since $\n^0$ is metrizable (c.f. \cite{As03}, Theorem~B.1.2), the distribution of $\Prn$ converges to a distribution $\cp_{0,r}^\g$ on configurations of points in $B_r(0)$ satisfying 
\begin{equation}\label{regeneq2}
\frac 1{\cfe[\eta_2-\eta_1]}\cfe\lk\sum_{n=\eta_1}^{\eta_2-1} h(\Prn)\rk = 
\int_{\n^0} h(\psi\cap B_r(0)) \cp_{0,r}^{f}\d(\psi\cap B_r(0)),
\end{equation}
for all $h:\n_0\to \ir^+$.
Since the distributions $(\cp_{0,r}^f)_{r=1}^\infty$ are the limits  of $(\Prn)_{r=1}^\infty$, they satisfy the consistency condition of Kolmogorov's extension theorem and therefore there exists a probability distribution $\cfpi$ on $\n^0$ having $\cp_{0,r}^\g$ as the distribution of its restriction to $B_r(0)$.
This proves the existence of the $\g$-probability.

The left-hand side of (\ref{regeneq2}) can be replaced by
an ergodic average
(c.f. \cite{As03} Theorem B.3.1); i.e., for all $r\in\Bbb N$,
for $\cfp$-almost all $\phi\in\n^0$,
\begin{eqnarray*}\lim_{k\to\infty}\frac 1{k}\sum_{n=0}^{k-1} h(\thetg^n\phi\cap B_r(0)) &=& 
\int_{\n^0} h(\psi\cap B_r(0)) \cp_0^{\g,r}\d(\psi\cap B_r(0))\\
&=& 
\int_{\n^0} h(\psi\cap B_r(0)) \cfpi(\d\psi). 
\end{eqnarray*}
Finally $r$ varies in the integers and hence the last equation gives (\ref{regeneq}), for $\cfp$-almost all $\phi$.

By defining $h$ as the continuity indicator of $\g$, the $\cfpn$-almost sure continuity of $\g$ and (\ref{regeneq}) give its $\cfpi$-almost sure continuity and hence that of $\Thetg$ at $\cfpi$. Therefore $\cfpi$ is invariant under the action of $\Thetg$. Also ergodicity is clear from  regeneration. 
\end{proof}

The main technical difficulty for using Theorem~\ref{regenver}
consists in finding an appropriate sequence $(\eta_i)_{i=1}^\infty$.
Proposition~\ref{fstpa} below leverages the strong Markov property
of Poisson point processes to find appropriate sequences and prove
the existence of the \pomk{} probability for the \pomk{} $s$ for
homogeneous Poisson point processes.
Proposition~\ref{fuapa} uses the same approach to show that
the same holds true for the directional \pomk{} $d_\alpha$.
Other examples can be found
in Section \ref{proofs}.

\section{More on Examples}\label{proofs}

\subsection{Strip  \PoM{}} \label{sr}
Let $\cp_0$ denote the Palm distribution of the homogeneous Poisson
point process on $\R^2$. It follows from results in \cite{FeLaTh04} 
(in Theorem 3.1. of this reference,
the authors proved that the graph of this \pom{} has 
finite branches, which is equivalent to evaporation)
that $\cp_0$ evaporates  under the action of the strip \pomk{} $s$. 
It is also shown in Proposition~\ref{fstpa} below that $\cp_0$,
admits a unique $s$-probability which 
satisfies the continuity requirements of Theorem \ref{continv}.

This \pom{} also allows one to illustrate 
the need of the continuity property in Theorem \ref{continv}.
Consider the setup of Proposition~\ref{fstpa}.
For all $\phi\in\n^0$ such that the origin has infinitely many pre-images,
change the definition of the \pomk{} $s$ as follows: it is
now the closest point on the
right half plane which has no other point of of $\phi$ in the
ball of radius $1$ around it.
Due to evaporation, this changes the definition of $s$ on a set
of measure zero under ${\cal P}_0^{s,n}$, for all $n\in\Bbb N$, and hence,
the sequence $(\cp_0^{s,n})_{n=1}^\infty$
is again converging
to the same limit as that defined in the proof of Proposition~\ref{fstpa}.
But under the action of the new $s$, $(\theta_{s})_*\cp_0^{s}$
is not equal to $\cp_0^{s}$ due to the facts 
that (i) $0$ has infinitely many pre-images $\cp_0^{s}$-a.s. and (ii) 
there is no point of the point process in the ball of radius $1$.
This does not agree with the fact that, in the right half plane,
the distribution of $\cp_0^{s}$ is a Poisson point process (see
the proof of Proposition~\ref{fstpa}). Hence,
$\cp_0^{s}$ is not invariant under the action of $(\theta_{s})_*$.
\subsection{Directional \PoM{}}
\label{dr} 
The directional \pom{} was introduced in \cite{BaBo07}.
Let $e_1$ be the first coordinate unit vector
The \emph{directional \pomk{} $d$}
maps the origin to the nearest
point in the right half-space, defined by $e_1$,
i.e., for all $\phi\in\n^0$,
\eq{\label {drd}d(\phi):=\argmin\{\norm{y};y\in  \phi,y\cdot e_1>0\}.}
The associated \pom{} will be denoted by $D$. 

The directional \pomk{} on $\R^2$ with deviation limit $\alpha$,
$d_{\al}$, is similar to $d$, except that the
point $y$ is chosen in the cone with angle $2\al$ and central
direction $e_1$ rather than in a half plane; i.e., 
\eq{\label {drda}
d_{\al}(\phi,x):=\argmin\{\norm{y};y\in  \phi,\frac{y}{\norm{y}}\cdot u>\cos \al\}.}
When $\al=\frac\pi 2$ one has  $d_{\al}=d$.
Its \pom{} is denoted by $D_{\al}$.

When $\al<\pi/2$, it can be shown that
the homogeneous Poisson point process on $\R^2$ evaporates under
the action of $d_\al$, and from Proposition~\ref{fuapa} below,
it admits a unique $d_\al$-probability which 
satisfies the continuity requirements of Theorem \ref{continv}.

\subsection{Regeneration} \label{Markovian}
This subsection is focused on the existence of \pomk{}-probabilities for
point-maps defined on Poisson point processes.
It is based on Theorem~\ref{regenver} and is illustrated by  two examples. 

\begin{prop}\label{fstpa}
If $s$ is the strip \pomk{}, and
$(\Phi,\p)$ is a homogeneous Poisson point process 
in the plane with distribution $\cp$, then
the $s$-probability exists and is given by (\ref{regeneq}). 
In addition, for all $n$,  $s$ is $\cp_0^{s,n}$-almost surely continuous.
Therefore the action of $(\theta_{s})_*$ preserves $\cp_0^{s}$ and is ergodic.
\end{prop}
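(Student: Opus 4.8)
The plan is to obtain both assertions from Theorem~\ref{regenver}: its first conclusion yields the existence of $\cp_0^{g_s}$ together with formula~(\ref{regeneq}), and its last clause upgrades the almost sure continuity of $g_s$ into the stated invariance and ergodicity. So the real work is, for each fixed integer $r>0$, to produce a regeneration sequence $(\eta_i)_{i=1}^\infty$ satisfying the two hypotheses of that theorem, and then to verify the continuity of $g_s$. First I would read the planar Poisson process $\Phi$ in the horizontal direction: regard the abscissa $x_1$ as time, let $\mathcal F_t$ be the $\sigma$-field generated by the points of $\Phi$ with abscissa at most $t$, and follow the orbit $X_n=\fst^n(\phi,0)=(a_n,b_n)$ of the origin. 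Since $\fst$ sends a point to the leftmost point of $\Phi$ in the open half strip on its right, the abscissas satisfy $a_0<a_1<\cdots$, the vertical displacements obey $|b_{n+1}-b_n|\le 1$, and $a_{n+1}$ is the first instant after the stopping time $a_n$ at which $\Phi$ puts a point in the horizontal band $[b_n-1,b_n+1]$; hence every $a_n$ is an $(\mathcal F_t)$-stopping time and $b_n$ is $\mathcal F_{a_n}$-measurable.

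The next step is to extract an i.i.d.\ skeleton. By the strong Markov property of the Poisson process \cite{Mo05}, conditionally on $\mathcal F_{a_n}$ the restriction of $\Phi$ to the right of $a_n$ is a fresh homogeneous Poisson process independent of the past; the scan producing $X_{n+1}$ inspects only this fresh process, and after the vertical shift by $-b_n$ (which leaves its law unchanged) the pair consisting of the horizontal gap $a_{n+1}-a_n$ and the vertical offset $b_{n+1}-b_n$ has a fixed distribution. These scan outcomes are therefore i.i.d.\ and independent of the past, and in particular the events $\{a_{n+1}-a_n>2r\}$ (a \emph{long jump}) are i.i.d.\ with probability $p>0$, since a Poisson process leaves a band empty over a fixed horizontal length with positive probability.

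The delicate point, which I expect to be the main obstacle, is to convert long jumps into \emph{genuinely} i.i.d.\ blocks of recentred configurations $\Phi_n^r$, rather than merely one-dependent ones: a purely forward long-jump criterion couples the emptied corridor to the right edge of the preceding block, because the last ball of that block sees part of the corridor. To decouple cleanly I would use both a backward empty corridor and a forward long jump: take as regeneration abscissa the first instant $T$ at which a box of horizontal width $2r$ and fixed height exceeding $2r$, centred vertically at the current orbit height, is seen to be empty, and let $\eta_i$ be the index of the first orbit point to the right of the $i$-th such $T$, subject to that point making a forward jump exceeding $2r$. Each $T$ is an $(\mathcal F_t)$-stopping time; because the orbit moves only rightward, every ball of the block reaches left of $T$ by at most $r$, and the two side conditions force the single protruding ball to fall inside the a priori emptied box, so the part of the block lying left of $T$ is deterministically empty. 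The strong Markov property at $T$ then makes the whole block $Y_i=(\Phi_{\eta_i}^r,\dots,\Phi_{\eta_{i+1}-1}^r)$ a fixed functional of the fresh Poisson process to the right of $T$, so the $Y_i$ are i.i.d.\ and $Y_{i+1}$ is independent of $\eta_1,\dots,\eta_i$, which is hypothesis~2; meanwhile the i.i.d.\ skeleton shows this empty-box/long-jump pattern recurs with positive probability per step, so the increments $\eta_{i+1}-\eta_i$ are i.i.d., have finite mean, and are aperiodic (hence non-lattice), which is hypothesis~1. Theorem~\ref{regenver} then delivers the existence of $\cp_0^{g_s}$ and~(\ref{regeneq}). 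The bookkeeping needed to keep the protruding ball inside a box of fixed height is exactly the part I would treat most carefully.

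It remains to prove the $\cp_0^{g_s,n}$-almost sure continuity of $g_s$ and to conclude. The map $g_s$ can be discontinuous at $\phi$ only when the leftmost point of the open half strip is not uniquely and stably selected, i.e.\ when two points of $\phi$ share the minimal admissible abscissa, or when some point of $\phi$ lies exactly on a horizontal boundary line $x_2=\pm1$ of the strip. Writing $\cp_0^{g_s,n}=(\theta_{X_n})_*\cp_0$ and recalling that, by Slivnyak's theorem \cite{baccelli03elements}, under the Palm measure $\cp_0$ the process $\Phi$ is a homogeneous Poisson process with an added atom at the origin, I would discard both coincidences through the Mecke formula: the expected number of ordered pairs of distinct points of $\Phi$ with equal abscissas, and the expected number with vertical coordinates differing by exactly $1$ — the latter being precisely the event that, after recentring at $X_n$, a point sits on $x_2=\pm1$ — are each the integral of the indicator of a set of codimension one, hence vanish, and the analogous computations for pairs involving the origin vanish too. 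Thus the discontinuity set of $g_s$ is $\cp_0^{g_s,n}$-null for every $n$, so $g_s$ is $\cp_0^{g_s,n}$-almost surely continuous. The final assertion of Theorem~\ref{regenver} then applies and gives that $\cp_0^{g_s}$ is invariant under $(\theta_{g_s})_*$ and that $\theta_{g_s}$ is ergodic, completing the argument.
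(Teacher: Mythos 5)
Your proposal follows the same route as the paper: both reduce the statement to Theorem~\ref{regenver} by exhibiting regeneration times built from long horizontal jumps of the orbit, both rely on the stopping-set/strong Markov property of the Poisson process to make the successive scans i.i.d., and both finish by checking that the discontinuity set of $g_s$ is null under every $\cp_0^{g_s,n}$. The one place where you genuinely diverge is the definition of $(\eta_i)$. The paper takes $\eta_i$ to be the $i$-th index $n$ with $p_n=\pi_1(X_{n+1}-X_n)>2r$ and asserts outright that the blocks $Y_i$ are i.i.d.; as you observe, a purely forward long-jump criterion leaves the first ball of a block overlapping the corridor emptied while producing the preceding jump (and, with the paper's literal indexing, the last ball of one block even overlaps the first ball of the next), so what one gets most naturally is identically distributed, one-dependent blocks rather than independent ones. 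Your backward-empty-box condition is a legitimate way to restore exact independence, at the price of the height/width bookkeeping you flag; alternatively one could simply invoke the one-dependent version of the regenerative limit theory, which is the implicit reading that salvages the paper's shorter construction. On the continuity step you are also slightly more complete: the paper lists only ties in abscissa and emptiness of the half-plane as sources of discontinuity, whereas a point sitting exactly on a horizontal boundary line $x_2=\pm1$ of the strip is a further (still null) source that your Mecke-formula computation covers. None of this changes the conclusion; your argument is a sound, somewhat more careful execution of the same proof.
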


\begin{proof}

The random vector, $X_1=X_1(\Phi)$ defined in Equation (\ref{eq:X}),
depends only on the points of $\Phi$
which belong to the rectangle $\sfa_0(\Phi)=[0,x_1]\times[-1,1]$,
where $x_1$ is the first coordinate of the left most point of
$\Phi\cap T(0)$. It is easy to verify that $\sfa_0(\Phi)$
is a stopping set (c.f. \cite{Mo05} and \cite{Zu06}). 
Let $\sfai(\Phi)$ be the rectangle which is needed to determine the
image of the origin in  $\theta_{\xfai}\Phi$ under the action of $\fst$.
Let $\sfai+\xfai$ be the translation of the set $\sfai$
by the vector $\xfai$
Then it is clear that  
\eq{\label{unv.dif}\ufak=\bigcup_{n=0}^k \lp\sfai+\xfai\rp}
is also a stopping set.
As a consequence, the strong Markov property of Poisson point
process (c.f. \cite{Zu06}) implies, given $X_0,\ldots,\xfai$, 
the point process on the right half-plane of $\xfai$ is distributed
as the original Poisson point process. Let 
\eqn{p_n=\pi_1(X_{n+1}-\xfai),}
where $\pi_1$ is the projection on the first coordinate. 
Since $\cp_0^{s,n}$, restricted to the right half-plane,
is the distribution of a Poisson point process and since
the sequence $(p_n)_{n=1}^\infty$ depends only on the
configuration of points in the right half-plane,
$(p_n)_{n=1}^\infty$ is a sequence of i.i.d. exponential
random variables with parameter $2\lambda$, where $\lambda$
is the intensity of the point process. 
Also if $\eta_i$ is the integer $n$ such that, for the $i$-th time,
$p_{n}$ is larger than $2r$, then the sequence 
$(\eta_i)_{i=1}^\infty$ forms a sequence of regenerative
times for configuration of points in $B_r(0)$.
Combining this with the distribution of $p_n$ gives that
$(\eta_i)_{i=1}^\infty$ satisfies the required
conditions in Theorem~\ref{regenver}. 

Finally, consider the discontinuity points of $s$. Let $\phi\in\n^1$ with
$s(\phi)=x=(x_1,x_2)$. It is shown below that if
$\phi$ is a discontinuity point of
$s$, then either $x$ lies on the boundary of $T(0)$ or there is a point
of $\phi$ other than the origin and $x$ which lies on the perimeter of
the rectangle $[0,x_1]\times[-1,1]$. This proves that, for all $n$,
the discontinuity points of $s$ are of $\cfpn$-zero measure. To prove the
continuity claim, assume that $\phi$ satisfies none of the above condition.
Hence there exists $\epsilon>0$ such that, $x_1>\epsilon$,
$x_2\in[-(1-\epsilon),1-\epsilon]$ and there is no other point of $\phi$
in $[-\epsilon,x_1+2\epsilon],[-1-\epsilon,1+\epsilon]$. Therefore, for
$\psi\in\n^0$ close enough to $\phi$ in the vague topology, there is a
point $y=(y_1,y_2)\in\psi$ in an $\epsilon$-neighborhood of $x$, which gives
$y\in(0,x_1+\epsilon)\times(-1,1)$ and since there is no  point of
$\psi$ other than $0$ and $y$ in $[0,x_1+\epsilon]\times[-1,1]$,
$s(\psi)=y$, which proves the claim.

Therefore all conditions of Theorem~\ref{regenver} are satisfied,
which proves the proposition.
\end{proof}

Note that the proof shows that the distribution ${\cal P}^s_0$
on the right half-plane is homogeneous Poisson with the original intensity. 

\begin{prop}\label{fuapa}
Let $d_{\al}$ be the directional \pomk{} defined in Subsection \ref{dr}
with $\al<\pi/2$. Under the assumptions of Proposition~\ref{fstpa}, 
the $d_{\al}$-probability exists and is given by (\ref{regeneq}).
In addition, for all $n$, $d_{\al}$ is $\cp_0^{d_{\al},n}$-almost surely
continuous and hence the action of $(\theta_{d_{\al}})_*$
preserves $\cp_0^{d_{\al}}$ and is ergodic.
\end{prop}

\begin{proof}
The proof is similar to that of Proposition~\ref{fstpa}, but more subtle.
It uses the same notation as that of Theorem~\ref{regenver}.

Let $C^{\al}$ denote the cone with angle $2\al$, central direction $e_1$,
and apex at the origin.
Let $X_1(\phi)$ be the point of $C^{\al}\cap\phi$ which is
the closest to the origin (other than the origin itself).
Let $\cfu_0(\phi)$ be the closed subset of $C^{\al}$ consisting
of all points of $C^{\al}$ which are not
farther to the origin than $X_1(\phi)$. This set will
be referred to as a bounded cone below.
One may verify that $\cfu_0(\phi)$ is a stopping set and that
$X_1$ is determined by $\cfu_0$. 
Let $\cfui(\phi)$ be the closed bounded cone which
is needed to determine the image of the origin in
$\theta_{\xfui}\phi$ under the action of $\fua$.
It is easy to verify that  
\eq{\ufuk=\bigcup_{n=0}^k \lp\cfui+\xfui\rp,}
is also a stopping set. It is a  simple geometric fact that  
\eq{\label{empty.inetr}\ufu_{n-1}\cap C^{\pi/2-\al}(\xfui)=\{\xfui\},}
and as a consequence, given $\ufu_{n-1}$, the point process in 
$C^{\pi/2-\al}+\xfui$ is distributed as the original point process.
This fact together with the facts that $\ufai$ is a stopping set
and  $\cfui$ has no point of the point process other than $\xfui$
and $X_{n+1}$, give that, in the $n$-th step, with probability at least 
$\min\{1,(\pi/2-\al)/(\al)\}$,
$X_{n+1}$ is in $C^{\pi/2-\al}(\xfui)$.
Let $\eta_i$ be the $i$-th time for which
$X_{n+1}\in C^{\pi/2-\al}(\xfui)$ and has a distance more than $2r$
from the edges of $C^{\pi/2-\al}(\xfui)$.
The Poisson distribution of points in $C^{\pi/2-\al}(\xfui)$
gives that the random variables $\eta_{i+1}-\eta_i$
are stochastically bounded by an exponential random variable
and hence they satisfy all requirements of Theorem~\ref{regenver}. 

As in the case of the strip \pom, it can be shown that if $d_\al$ is 
not continuous at $\phi\in\n^0$ then either there is no point in the
interior of $\cfu$ or there is a point on the perimeter of $\cfu_0(\phi)$.

Note that since $\ufa_{n-1}$ is a stopping set and
$(\cfu+\xfai)\cap\ufa_{n-1}$ has no point of the point process
other than $\xfui$, $X_{n+1}$ is distributed as in a Poisson point
process in $\cfui+\xfai$ given the fact that some parts contain no point.
Therefore since the discontinuities of $d_{\al}$ are of probability
zero under the Poisson distribution, they are of probability zero under
all $\cp_0^{d_{\al},n}$ and hence
Theorem~\ref{regenver} proves the statements of the proposition. 
\end{proof}

The statement of Proposition~\ref{fuapa} is also true in the case
 $\al=\pi/2$ and can be proved using ideas similar to those
in the proof for $\al<\pi/2$. However the technical details
of the proof in this case may hide the main idea and this case
is hence ignored in the proposition. 

\subsection{Condenser and Expander \PoM}
\label{CEPM}
Assume each point $x\in\phi$ is marked with  
\eqn{\nu_p(x)=\#(\phi\cap B_1(x)) \quad (\text{respectively }
\nu_m(x)=\sup\{r>0: \phi\cap B_r(x)=\{x\}\}),}
where $B_r(x)=\{y\in \R^2:\ \norm{x-y}<r\}$.
Note that $\nu_p(x)$ and $\nu_m(x)$ are always positive.
The \emph{condenser \pom} $P$ 
(respectively \emph{expander \pom} $M$) acts on 
counting measures
as follows: it goes from each point $x\in\phi$
to the closest point $y$ such that $\nu_p(y)\geq 2\nu_p(x)$ 
(respectively $\nu_m(y)\geq 2\nu_m(x)$).
It is easy to verify that both \pom s are compatible
and almost surely well-defined on the homogeneous Poisson point process.    

Poisson point processes evaporate under the action
of both \pom{}s $P$ and $M$.

The condenser \pomk{} provides 
an example where no $f$-probability exists.
Let $(\id,\cp)$ be the Poisson point process
with intensity one on $\R^2$ and let $p$ be the condenser \pomk{}. Clearly 
\eqn{\cp_0^{p,n}[\phi(B_1(0))>2^n]=1.}
Therefore the tightness criterion is not satisfied and thus
there is no convergent subsequence of $(\cp_0^{p,n})_{n=1}^\infty$. 

Similarly, the expander \pomk{} allows
one to show that there is no converse to Proposition \ref{s.cont}.
More precisely, $\theta_m$ is continuous
${\cal P}_0^m$-almost surely but the \pomk{} is ${\cal P}_0^m$-almost
surely discontinuous. Hence 
the converse of the statement of Proposition~\ref{s.cont}
does not hold in general.
Consider $m$ on the homogeneous Poisson point process.
One can verify that $({\cal P}_0^{m,n})_{n=1}^\infty$
converges to the probability measure concentrated on the
counting measure $\delta_0$ with a single point at the origin.
In this example, $\theta_m$ is ${\cal P}_0^m$-a.s. continuous.
This follows from the fact that when looking at the point process
in any bounded subset of $\sp$, it will be included in some ball
of radius $r$ around the origin and therefore
the configuration of points in it will be constant
(only one point at the origin) after finitely many application of $\theta_m$.
But the \pomk{} $m$ makes larger and larger steps and hence the sequence of
laws of $m$ under ${\cal P}_0^{m,n}$ diverges.
Hence $m$ is almost surely not continuous at the realization
$\delta_0$ on which ${\cal P}_0^{m}$ is concentrated.     

\subsection{Closest Hard Core \PoM}
\label{CHC}
By definition, the image of $x\in \phi$ by the
\emph{closest hard core \pom} $H$ is the closest point $y$
of $\phi$ (including $x$ itself) such that $\phi(B_1(y))=1$. 
Its \pomk{} will be denoted by $h$.

The \pomk{} $h$ is 1-periodic.
It provides an illustration of Theorem \ref{thmcyc1}.
Consider $h$ acting on a stationary Poisson point process
of intensity one in the plane.
For the simple counting measure $\phi$,
let $\Psi(\phi)$ denote sub-point process of $\phi$
made of points $y$ of $\phi$ such that $\phi(B_1(y))=1$.
If $\phi$ is chosen w.r.t. $\cp$, then $\Psi(\phi)$
is also a stationary point process.
Let $\cfq$ denote the Palm probability of $\Psi(\phi)$.
Then $\cfpi$ is  absolutely continuous w.r.t. $\cfq$ and its
Radon-Nikodym derivative at each $\Psi(\phi)\in\n^0$ is proportional 
to  the number of  points of $\phi$ in the Voronoi cell of the
origin in $\Psi(\phi)$.  

\subsection{Quadri-Void Grid \PoM{}}\label{nonuniqueconv.sub.}
Let $\psi=\Z\backslash 4\Z$; i.e., those integers which are not multiple
of $4$. If $U$ is a uniform random variable in $[0,4)$, then $\psi+U$
is a stationary point process on the real line which will be called the
quadri-void grid below. The Palm distribution of this point process
has mass of $\frac 13$ on $\theta_{1}\psi,\theta_{2}\psi$ and $\theta_{3}\psi$.

Let $q$ be the \pomk{} defined by 
\eqn{q({\theta_{1}\psi})=2 \text{ , } 
q({\theta_{2}\psi})=1\text{ and }q({\theta_{3}\psi})= -2.}
For odd values of $n>0$, one has 
\eqn{{\cal P}_0^{q,n}[\phi=\theta_{3}\psi]=\frac 23,\quad {\cal P}_0^{q,n}[\phi=\theta_{1}\psi]=\frac 13,}
whereas for even values of $n>0$,
\eqn{{\cal P}_0^{q,n}[\phi=\theta_{3}\psi]=\frac 13,\quad {\cal P}_0^{q,n}[\phi=\theta_{1}\psi]=\frac 23.}
Therefore $({\cal P}_0^{q,n})_{n=1}^\infty$ 
has two convergent subsequences with different limits,
one for even and one for odd values of $n$,
and none of these limits is invariant under the action of 
$(\theta_q)_*$.
However, the sequence $(\widetilde {\cal P}^{q,n}_0)_{n=1}^\infty$
converges to a limit $\widetilde {\cal P}^q_0$ which is the mean
of the odd and even $g$-probabilities, i.e., 
\eqn{\widetilde {\cal P}^q_0[\phi=\theta_{3}\psi]=\frac 12,\quad \widetilde {\cal P}^q_0[\phi=\theta_{1}\psi]=\frac 12,}
and it is invariant under the action of $(\theta_q)_*$.

\appendix

\section{Random Measures}\label{RandMeas}
This subsection summarizes the results about random measures which are used in this paper in order to have a self-contained paper. The interested reader should  refer to \cite{Ka76,Ka02}. No proofs are given.

Let $S$ be a locally compact (all points have a compact neighborhood) second countable (has a countable base) Hausdorff space. In this case, $S$ is known to be Polish, i.e., there exists some separable and complete metrization $\rho$ of $S$.

Let $\bo(S)$ be the Borel algebra of $S$ and $\bob(S)$ be all bounded elements of $\bo(S)$; i.e., all $B\in\bo(S)$ such that the closure of $B$ is compact. Let $\m(S)$ be the class of all Radon measures on $(S,\bo(S))$; i.e., all measures $\mu$ such that for all $B\in\bob(S)$, $\mu B<\infty$ and let $\n(S)$ be the subspace of all $\Bbb N$-valued measures in $\m(S)$.
 The elements of $\n(S)$ are {\em counting measures}.
For all $\mu$ in $\m(S)$, define 
\eqn{\bob(S)^\mu:=\{B\in\bob(S);\mu(\partial B)=0\}.}
Let  $C_b(S)$ (respectively $C_c(S)$) be the class of all continuous and bounded (respectively continuous and compact support) $h:S\to\R^{+}$. Let   
\eqn{\mu h:=\int_S h(x)\mu(\d x),}
where the latter is equal to $\sum_{x\in\mu}h(x)$ when $\mu$ is a counting measure. Note that in the summation one takes the  multiplicity of points into account. 
The class of all finite intersections of $\m(S)$-sets (or $\n(S)$-sets) of the form $\{\mu:s<\mu h<t\}$ with real $r$ and $s$ and arbitrary $h\in C_c(S)$ forms a base of a topology on $\n(S)$ which is known as the  \emph{vague topology}. In the vague topology $\n(S)$ is closed in $\m(S)$ (\cite{Ka76}, p. 94, A 7.4.).
A necessary and sufficient condition for the convergence in this topology (\cite{Ka76}, p. 93) is:
\eqn{\mu_n\tov\mu \Leftrightarrow \forall h\in C_c(S),\  \mu_n h\to\mu h.}
If one considers the subspace of all bounded measures in $\n(S)$, one may replace $C_c(S)$ by $C_b(s)$. This leads to the \emph{weak topology} for which 
\eqn{\mu_n\tow \mu \Leftrightarrow \forall h\in C_b(S),  \mu_n h\to\mu h.}
The convergence in distribution of the random variables
$\xi_1,\xi_2,\ldots$, defined on $(\Omega, \cal F, \p)$
and taking their values in $(S,\bo(S))$,
to the random element $\xi$ is defined as follow
\eqn{\xi_n\tod \xi\Leftrightarrow(\xi_n)_*\p\tow (\xi)_*\p.}

The next lemma describes the relation between the convergences in the vague topology and the weak one. 

\begin{lem}[\cite{Ka76}, p.95, A 7.6.]\label{weak.vague}
For all bounded $\mu,\mu_1,\mu_2,\ldots\in\m(S)$, one has
\eqn{\mu_n\tow\mu\Leftrightarrow\mu_n\tov\mu \text{ and } \mu_nS\to\mu S.} 
\end{lem}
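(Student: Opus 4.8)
The plan is to handle the two implications separately: the forward one is immediate, while the reverse one requires a tightness-type argument ruling out mass that escapes to infinity. For the forward implication, suppose $\mu_n\tow\mu$, so that $\mu_n h\to\mu h$ for every $h\in C_b(S)$. Since every element of $C_c(S)$ is in particular bounded, $C_c(S)\subset C_b(S)$, and vague convergence $\mu_n\tov\mu$ follows at once. Applying the definition to the constant function $h\equiv1$, which lies in $C_b(S)$, gives $\mu_n S=\mu_n1\to\mu1=\mu S$; thus both conditions on the right-hand side hold.

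For the reverse implication, I would assume $\mu_n\tov\mu$ and $\mu_n S\to\mu S$ and fix an arbitrary $h\in C_b(S)$ with $M:=\sup_S h<\infty$. The idea is to localize $h$ by means of a cutoff $\chi\in C_c(S)$ with $0\le\chi\le1$ and to split
\eqn{\mu_n h=\mu_n(h\chi)+\mu_n\big(h(1-\chi)\big).}
Since $h\chi\in C_c(S)$, vague convergence gives $\mu_n(h\chi)\to\mu(h\chi)$. For the tail term, the bound $0\le h(1-\chi)\le M(1-\chi)$ yields
\eqn{0\le\mu_n\big(h(1-\chi)\big)\le M\big(\mu_n S-\mu_n\chi\big),}
and the right-hand side converges to $M(\mu S-\mu\chi)=M\,\mu(1-\chi)$, using the total-mass hypothesis together with $\chi\in C_c(S)$.

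The decisive step --- and the only place the hypotheses are genuinely used --- is to make $\mu(1-\chi)$ arbitrarily small. Here I would invoke that $S$, being locally compact, second countable and Hausdorff, is $\sigma$-compact, so there is a sequence $\chi_j\in C_c(S)$ with $0\le\chi_j\le1$ and $\chi_j\uparrow1$ pointwise; since $\mu$ is bounded, monotone convergence gives $\mu(1-\chi_j)\downarrow0$. Fixing $\epsilon>0$ and choosing $\chi=\chi_j$ with $\mu(1-\chi)<\epsilon$, the estimates above give $\limsup_n\mu_n\big(h(1-\chi)\big)\le M\epsilon$ and $\mu\big(h(1-\chi)\big)\le M\epsilon$, whence
\eqn{\limsup_n\no{\mu_n h-\mu h}\le\limsup_n\no{\mu_n(h\chi)-\mu(h\chi)}+2M\epsilon=2M\epsilon.}
Letting $\epsilon\downarrow0$ yields $\mu_n h\to\mu h$, which proves $\mu_n\tow\mu$. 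I expect the main obstacle to be exactly this tail control: vague convergence by itself permits mass to leak out to infinity, and it is the convergence of total masses $\mu_n S\to\mu S$ that forbids this and makes the cutoff estimate conclusive.
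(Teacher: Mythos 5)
Your proof is correct: the forward direction via $C_c(S)\subset C_b(S)$ and $h\equiv 1$, and the reverse direction via a compactly supported cutoff $\chi$ together with the total-mass hypothesis to control the tail $\mu_n\big(h(1-\chi)\big)$, is exactly the standard argument for this equivalence. The paper itself offers no proof of this lemma (the appendix explicitly states that no proofs are given and cites Kallenberg, A 7.6), and your argument is essentially the one found in that reference, so there is nothing further to compare.
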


According to Lemma~\ref{weak.vague}, when discussing the convergence of probability measures, there is no difference between the vague and the weak convergence.  

The following proposition is a key point in the development of the theory of random measures and random point processes (\cite{Ka76}, p. 95 A 7.7.). 

\begin{prop}\label{vague.p}
Both $\m(S)$ and $\n(S)$ are Polish in the vague topology. Also the subspaces of bounded measures in $\m(S)$ and $\n(S)$ are Polish in the weak topology.
\end{prop}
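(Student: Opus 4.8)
The plan is to prove Polishness of $\m(S)$ in the vague topology directly, by exhibiting a complete separable metric, and then to deduce the three remaining assertions as corollaries. Throughout I would use that $S$, being locally compact, second countable and Hausdorff, is $\sigma$-compact and that $C_c(S)$ admits a countable subfamily $\{h_k\}_{k\geq 1}$ which is dense in the supremum norm (with supports controlled inside an exhausting sequence of compacts) and which contains functions $h_k$ with $h_k\uparrow 1$ on compacts; such a family exists thanks to second countability together with local compactness.

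First I would metrize the vague topology. Setting
\eqn{d(\mu,\nu):=\sum_{k\geq1}2^{-k}\lp |\mu h_k-\nu h_k|\wedge 1\rp,}
I would verify that $d$ induces the vague topology. The implication $\mu_n\tov\mu\Rightarrow d(\mu_n,\mu)\to0$ is immediate since $\mu_n h_k\to\mu h_k$ for each $k$. For the converse, if $d(\mu_n,\mu)\to0$ then $\mu_n h_k\to\mu h_k$ for every $k$; dominating $\ind_K$ by a suitable $h_k$ for a given compact $K$ yields $\sup_n\mu_n(K)<\infty$, and the density of $\{h_k\}$ upgrades the convergence to $\mu_n h\to\mu h$ for all $h\in C_c(S)$, i.e. $\mu_n\tov\mu$. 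Separability is then straightforward: the finite sums of point masses at points of a countable dense subset of $S$ with rational weights form a countable $d$-dense subset of $\m(S)$.

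The step I expect to be the main obstacle is completeness. Given a $d$-Cauchy sequence $\{\mu_n\}$, the scalar sequences $\{\mu_n h_k\}_n$ converge for every $k$. I would first establish local boundedness, $\sup_n\mu_n(K)<\infty$ for each compact $K$, by dominating $\ind_K$ with an $h_k$ whose integrals form a convergent, hence bounded, sequence. Using this bound together with the density of $\{h_k\}$, I would extend the convergence to all of $C_c(S)$, obtaining a positive linear functional $L(h):=\lim_n\mu_n h$. The Riesz representation theorem on the locally compact Hausdorff space $S$ then furnishes a unique Radon measure $\mu\in\m(S)$ with $L(h)=\mu h$, and by construction $\mu_n\tov\mu$, so $d(\mu_n,\mu)\to0$. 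This gives completeness of $(\m(S),d)$ and hence that $\m(S)$ is Polish in the vague topology. The delicate points are precisely checking that $L$ is finite on all of $C_c(S)$ — which is exactly where local boundedness enters — and that the representing measure is genuinely Radon.

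Finally I would dispatch the remaining cases. Since $\n(S)$ is vaguely closed in $\m(S)$ (\cite{Ka76}, A 7.4.) and a closed subset of a Polish space is Polish, $\n(S)$ is Polish in the vague topology. For the subspaces of bounded measures under the weak topology I would invoke Lemma~\ref{weak.vague}: weak convergence amounts to vague convergence together with convergence of total masses. Accordingly I would augment $d$ by the term $|\mu(S)-\nu(S)|\wedge1$; metrizability and separability carry over verbatim, and completeness follows because a weakly Cauchy sequence is vaguely Cauchy — so it admits a vague limit $\mu$ — while the convergence of total masses and the weak Cauchy property force tightness, precluding any escape of mass; thus $\mu$ is bounded with $\mu(S)=\lim_n\mu_n(S)$ and $\mu_n\tow\mu$. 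The bounded counting measures, being weakly closed in the bounded measures, are then Polish as well. Here the secondary delicate point is the tightness argument that rules out escape of mass.
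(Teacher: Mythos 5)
First, a remark on the comparison itself: the paper does not prove this proposition. It appears in the appendix, which opens with ``No proofs are given,'' and is quoted from \cite{Ka76} (p.~95, A~7.7). So your attempt can only be measured against the standard argument, not against anything in the text.

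Your treatment of the vague topology is correct and is essentially that standard argument: metrize by a countable, sup-norm-dense, support-controlled family $\{h_k\}\subset C_c(S)$; separability from rational atomic measures; completeness from local boundedness of a Cauchy sequence together with the Riesz representation theorem; and Polishness of $\n(S)$ because it is vaguely closed in $\m(S)$. The one place that deserves the care you already signal is that the approximating $h_k$ for a given $h$ must have support in a compact set depending only on the support of $h$, so that the error $|\mu_n h-\mu_n h_k|$ is controlled by a single local bound; you state this, so the vague half stands.

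The weak-topology half has a genuine gap. The augmented metric $d(\mu,\nu)+\lp|\mu(S)-\nu(S)|\wedge 1\rp$ does induce the weak topology on the bounded measures (by Lemma~\ref{weak.vague}), but it is \emph{not} complete, and the assertion that ``the convergence of total masses and the weak Cauchy property force tightness'' fails for it. Take $S=\R$ and $\mu_n=\delta_n$: the sequence converges vaguely to the zero measure, hence is $d$-Cauchy, and has constant total mass $1$, so it is Cauchy for your augmented metric; yet it is not tight, the mass escapes to infinity, and it has no weak limit among the bounded measures. The trouble is that Cauchyness for your metric only controls $\mu\mapsto\mu h$ for $h\in C_c(S)$ plus the total mass, which says nothing about $\mu h$ for general $h\in C_b(S)$ --- precisely the information needed to rule out escape of mass. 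The conclusion is still true, but you must produce a different complete compatible metric or argue structurally: for instance use the Prokhorov metric, whose Cauchy sequences are genuinely tight; or embed the bounded measures into $\m(\hat S)$ for the one-point compactification $\hat S=S\cup\{\infty\}$, note that the weak topology on bounded measures on $S$ is the subspace topology inherited from $\m(\hat S)$ on $\{\mu:\ \mu(\{\infty\})=0\}$, and check that this set is a $G_\delta$ in the Polish space $\m(\hat S)$, hence Polish. The bounded counting measures then follow, as you say, by closedness.
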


Proposition~\ref{vague.p}  allows one to define measures on $\m(S)$ or $\n(S)$ which are Polish spaces and use for them the theory available for $S$. If $\cal M$ (respectively $\N$) is the $\sigma$-algebra generated by the vague topology on $\m(S)$ (respectively $\n(S)$), a {\em random measure} (respectively {\em random point process}) on $S$ is simply a random element of $(\m(S),{\cal M})$ (respectively$(\n(S),\N)$). Note that a random point process is a special case of a random measure. 

The next theorem  and  lemmas give handy tools to deal with convergence in distribution of random measures on $S$.   
\begin{thm}[\cite{Ka76}, p.22, Theorem 4.2.]
If $\mu, \mu_1,\mu_2,\ldots$ are random measures on $S$ (i.e., random elements of $(\m(S),{\cal M)}$), then 
\eqn{\mu_n\tod\mu\Leftrightarrow\mu_n h\tod\mu h, \quad \forall h\in C_c(S).}
\end{thm}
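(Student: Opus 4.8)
The plan is to prove the two implications separately; the forward implication is immediate, while the reverse is the substance of the theorem and rests on a tightness argument together with an identification of subsequential limits. For $\mu_n\tod\mu\Rightarrow\mu_nh\tod\mu h$, I would invoke the continuous mapping theorem. By the definition of the vague topology on $\m(S)$, the evaluation map $\pi_h:\m(S)\to\R$, $\pi_h(\nu)=\nu h$, is vaguely continuous for each fixed $h\in C_c(S)$. Since $\mu_n\tod\mu$ means weak convergence of the laws on the Polish space $(\m(S),\tov)$ (Proposition~\ref{vague.p}), pushing these laws forward through $\pi_h$ yields $\mu_nh\tod\mu h$. For the converse I would proceed in three steps.

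\emph{First}, I would establish tightness of the laws $\{\mu_n\p\}$ in $\m(S)$. The essential tool is the characterization of vaguely relatively compact subsets of $\m(S)$: a set $A\subset\m(S)$ is vaguely relatively compact iff $\sup_{\nu\in A}\nu h<\infty$ for every $h\in C_c(S)$. Here the hypotheses on $S$ enter decisively: local compactness and second countability furnish an exhaustion by compacts and associated Urysohn functions $h_j\in C_c(S)$ with $0\le h_j\le 1$ increasing to $1$, and vague compactness is controlled by this countable family. Since each real sequence $\{\mu_nh_j\}_n$ converges in distribution by hypothesis, it is tight; a diagonal argument over $j$ then yields, for every $\epsilon>0$, a vaguely relatively compact set of $\mu_n\p$-mass at least $1-\epsilon$, uniformly in $n$. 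Prohorov's theorem then makes $\{\mu_n\p\}$ relatively compact in distribution.

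\emph{Second}, I would identify every subsequential limit. If $\mu_{n_k}\tod\nu$ along a subsequence, the forward implication gives $\mu_{n_k}h\tod\nu h$, while the hypothesis gives $\mu_{n_k}h\tod\mu h$; by uniqueness of limits on $\R$, $\nu h\equid\mu h$ for each $h\in C_c(S)$. To upgrade this to equality of joint laws, note that for $h_1,\dots,h_p\in C_c(S)$ and $a_1,\dots,a_p\ge 0$ the combination $\sum_i a_ih_i$ again lies in $C_c(S)$ (recall that the functions in $C_c(S)$ are $\R^+$-valued), so $\sum_i a_i\nu h_i\equid\sum_i a_i\mu h_i$; equality of the Laplace transforms of all such non-negative combinations forces $(\nu h_1,\dots,\nu h_p)\equid(\mu h_1,\dots,\mu h_p)$. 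Since the evaluation maps $\{\pi_h:h\in C_c(S)\}$ generate the $\sigma$-field ${\cal M}$, a monotone-class argument shows that the law of a random measure is determined by these finite-dimensional distributions, whence $\nu\equid\mu$.

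\emph{Third}, I would combine the two: relative compactness together with the fact that every subsequential limit equals $\mu\p$ forces the full sequence to converge, i.e. $\mu_n\tod\mu$. The step I expect to be the main obstacle is the tightness argument, since it is precisely there that the one-dimensional information on the $\mu_nh$ must be transferred to the infinite-dimensional space $\m(S)$ through the vague compactness criterion, and where the structural assumptions on $S$ are indispensable. As the statement is quoted as Theorem~4.2 of \cite{Ka76}, I would ultimately attribute these standard random-measure facts to that reference.
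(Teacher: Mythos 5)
The paper does not prove this statement: it is quoted verbatim from Kallenberg (\cite{Ka76}, Theorem~4.2), and the appendix explicitly states that no proofs are given there. Your outline is the standard and correct argument for this result --- continuous mapping for the forward direction, and tightness via the vague compactness criterion plus identification of subsequential limits through Laplace transforms of nonnegative combinations $\sum_i a_i\nu h_i$ for the converse --- which is essentially how the cited source proves it, so there is nothing to reconcile with the paper.
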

\begin{lem}[\cite{Ka76}, p.22, Lemma 4.4.]\label{lem:discont.meas.zero}
If $\mu,\mu_1,\mu_2,\ldots$ are random measures on $S$ satisfying $\mu_n\tod\mu$, then $\mu_nh\tod\mu h$ for every bounded measurable function $h:S\to\R^+$ with bounded support satisfying $\mu (D_h)=0$ almost surely, where $D_h$ is the set of all discontinuity points of $h$.   Furthermore, 
\eqn{(\mu_n B_1,\ldots, \mu_n B_k)\tod (\mu B_1, \ldots \mu B_k),\qquad k\in \Bbb N, \quad B_1,\ldots B_k\in\bob(S)^\mu.}
\end{lem}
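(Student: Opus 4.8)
The plan is to reduce the distributional assertion to an almost sure, purely deterministic statement about vague convergence by invoking Skorokhod's representation theorem, and then to dispatch the deterministic statement with the classical mapping theorem for weak convergence of finite measures on a compact set. Since $\m(S)$ is Polish in the vague topology (Proposition~\ref{vague.p}), the hypothesis $\mu_n\tod\mu$ together with Skorokhod's representation theorem produces, on a common probability space, random measures $\tilde\mu_n\equid\mu_n$ and $\tilde\mu\equid\mu$ with $\tilde\mu_n\tov\tilde\mu$ almost surely. Because $D_h$ is a fixed Borel set, the event $\{\nu\in\m(S):\nu(D_h)=0\}$ is measurable and carries the same probability under $\tilde\mu$ as under $\mu$, so $\tilde\mu(D_h)=0$ almost surely. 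As $\nu\mapsto\nu h$ is a measurable functional, $\tilde\mu_n h\equid\mu_n h$ and $\tilde\mu h\equid\mu h$; hence it suffices to prove the deterministic claim that if $\nu_n\tov\nu$ in $\m(S)$, $h\ge 0$ is bounded and measurable with precompact support, and $\nu(D_h)=0$, then $\nu_n h\to\nu h$. Applying this pathwise on the full-measure event where both $\tilde\mu_n\tov\tilde\mu$ and $\tilde\mu(D_h)=0$ gives $\tilde\mu_n h\to\tilde\mu h$ almost surely, and therefore $\mu_n h\tod\mu h$.

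For the deterministic claim, write $C:=\mathrm{supp}(h)$, which is precompact, and first enclose $C$ in a relatively compact open set $G$ with $C\subset G$ and $\nu(\partial G)=0$: taking $G_\varepsilon:=\{x:d(x,C)<\varepsilon\}$, the boundaries $\partial G_\varepsilon$ lie in the pairwise disjoint level sets $\{d(\cdot,C)=\varepsilon\}$, so only countably many can carry positive $\nu$-mass, and I choose a good $\varepsilon$. Since $\bar G$ is compact, vague and weak convergence coincide on $\bar G$, and $\nu(\partial G)=0$ lets me transfer $\nu_n\tov\nu$ on $S$ into weak convergence $\nu_n(\cdot\cap\bar G)\tow\nu(\cdot\cap\bar G)$ of the restricted finite measures (this is the point where Lemma~\ref{weak.vague} and local compactness enter). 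As $h$ vanishes off $C\subset G$, one has $\nu_n h=\int_{\bar G}h\,d\nu_n$ and $\nu h=\int_{\bar G}h\,d\nu$; moreover the discontinuity set of $h|_{\bar G}$ is contained in $D_h\cap\bar G$, which is $\nu$-null, so the classical mapping theorem for weak convergence of finite measures on the compact Polish space $\bar G$ yields $\int_{\bar G}h\,d\nu_n\to\int_{\bar G}h\,d\nu$, i.e. $\nu_n h\to\nu h$. I expect this deterministic lemma to be the main obstacle, and within it the boundary bookkeeping: one must secure a $\nu$-null boundary for $G$, guarantee that no mass of $h$ leaks outside $\bar G$, and check that the discontinuities created at $\partial C$ (where $h$ drops to $0$) are genuinely absorbed by the hypothesis $\nu(D_h)=0$ rather than charging new $\nu$-mass; the upgrade from vague convergence of the full measures to weak convergence of the compact restrictions is the one step that truly uses local compactness together with the $\nu$-null boundary, since one must sandwich $\ind_{\bar G}f$ between $C_c$-functions whose defect is controlled by $\nu$ of a thin shell about $\partial G$.

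The ``Furthermore'' part follows from the same construction. Running Skorokhod's representation once, I obtain a single sequence with $\tilde\mu_n\tov\tilde\mu$ almost surely; for each $B_i\in\bob(S)^\mu$ the function $h_i:=\ind_{B_i}$ is bounded with precompact support and discontinuity set $D_{h_i}=\partial B_i$, and $B_i\in\bob(S)^\mu$ means precisely $\mu(\partial B_i)=0$, hence $\tilde\mu(\partial B_i)=0$ almost surely. Applying the deterministic lemma simultaneously to the finitely many $h_1,\dots,h_k$ on the common full-measure event gives $(\tilde\mu_n B_1,\dots,\tilde\mu_n B_k)\to(\tilde\mu B_1,\dots,\tilde\mu B_k)$ almost surely as vectors in $\R^k$, whence the claimed joint convergence in distribution. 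Alternatively, the Cram\'er--Wold device applied to $h=\sum_i t_i\,\ind_{B_i}$, whose discontinuities lie in $\bigcup_i\partial B_i$ and are thus $\mu$-null, reduces the vector statement directly to the scalar one already established.
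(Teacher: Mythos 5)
This lemma is imported verbatim from Kallenberg \cite{Ka76} (Lemma 4.4); the appendix states explicitly that no proofs are given there, so the paper contains no argument to compare yours against. On its own merits your proof is correct and follows the standard route: Skorokhod representation on the Polish space $\m(S)$ (licensed by Proposition~\ref{vague.p}) reduces everything to the deterministic claim that $\nu_n\tov\nu$ together with $\nu(D_h)=0$ forces $\nu_n h\to\nu h$ for bounded measurable $h\ge 0$ with precompact support, and you establish that claim by enclosing the support in a relatively compact open $G$ with $\nu(\partial G)=0$, upgrading vague convergence to weak convergence of the restrictions to $\bar G$ via Lemma~\ref{weak.vague} and a $C_c$-sandwich across a thin shell about $\partial G$, and then applying the mapping theorem, using that $D_{h|_{\bar G}}\subseteq D_h\cap\bar G$ is $\nu$-null. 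The bookkeeping you flag as the main obstacle (only countably many $\varepsilon$ with $\nu(\{d(\cdot,C)=\varepsilon\})>0$; measurability of $\nu\mapsto\nu(D_h)$, which holds since $D_h$ is a Borel subset of the compact closure of the support; the degenerate case $\nu(\bar G)=0$ before normalizing to probability measures) is all routine and correctly identified. Two minor remarks: the Cram\'er--Wold alternative for the vector statement uses $h=\sum_i t_i\ind_{B_i}$ with possibly negative $t_i$, which falls outside the stated hypothesis $h:S\to\R^{+}$, so it would need a separate (easy) extension to signed combinations; and when choosing $G_\varepsilon=\{x:d(x,C)<\varepsilon\}$ you should fix a small enough $\varepsilon$ (or a proper metric) so that $\bar G_\varepsilon$ is actually compact, which local compactness of $S$ permits. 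Neither affects your primary argument, which applies the deterministic lemma simultaneously to the $k$ indicators on the common Skorokhod event and is sound.
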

\begin{lem}[\cite{Ka76}, p.23, Lemma 4.5.]
A sequence $(\mu_n)_{n=1}^\infty$ of random measures on $S$ is relatively compact w.r.t. the convergence in distribution in the vague topology if and only if 
\eqn{\lim_{t\to\infty}\limsup_{n\to\infty}\p[\mu_nB>t]=0,\quad \forall B\in\bob(S).}
\end{lem}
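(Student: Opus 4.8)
The plan is to reduce the statement to the classical Prokhorov theorem on the Polish space $\m(S)$ together with the description of the vaguely relatively compact subsets of $\m(S)$. Since $\m(S)$ is Polish in the vague topology by Proposition~\ref{vague.p}, the family of laws of $\{\mu_n\}$ on $\m(S)$ is relatively compact for convergence in distribution if and only if it is tight, i.e. if and only if for every $\epsilon>0$ there is a vaguely compact $K=K_\epsilon\subset\m(S)$ with $\inf_n\p[\mu_n\in K]\ge 1-\epsilon$. The whole argument therefore hinges on the vague compactness criterion: a set $A\subset\m(S)$ has compact vague closure if and only if $\sup_{\mu\in A}\mu f<\infty$ for every $f\in C_c(S)$. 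I would first record this fact, obtaining it either from Banach--Alaoglu after embedding $\m(S)$ into the dual of $C_c(S)$, or by a direct diagonal extraction exploiting the second countability of $S$.

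For sufficiency, fix $\epsilon>0$. Since $S$ is locally compact and second countable it is $\sigma$-compact, so I may take an increasing sequence of relatively compact open sets $B_1\subset B_2\subset\cdots$ with $\bigcup_k B_k=S$, every compact set lying in some single $B_k$. Choose $f_k\in C_c(S)$ with $\ind_{B_k}\le f_k\le 1$ and compact support $S_k$. By hypothesis applied to $S_k\in\bob(S)$, pick $t_k<\infty$ with $\limsup_n\p[\mu_n(S_k)>t_k]\le\epsilon 2^{-k}$, and after enlarging $t_k$ to absorb the finitely many small indices $n$ the bound $\p[\mu_n(S_k)>t_k]\le\epsilon 2^{-k}$ holds for all $n$. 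Set $K:=\{\mu\in\m(S):\mu f_k\le t_k\ \forall k\}$; each map $\mu\mapsto\mu f_k$ is vaguely continuous so $K$ is vaguely closed, and since any $f\in C_c(S)$ is supported in some $B_k$ with $f\le\|f\|_\infty\sum_{j\le k}\ind_{B_j}\le\|f\|_\infty\sum_{j\le k}f_j$, the criterion above shows $K$ is vaguely compact. As $\mu_n f_k\le\mu_n(S_k)$, a union bound gives $\p[\mu_n\notin K]\le\sum_k\p[\mu_n(S_k)>t_k]\le\epsilon$ for all $n$, so $\inf_n\p[\mu_n\in K]\ge 1-\epsilon$ and Prokhorov's theorem yields relative compactness.

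For necessity I would argue by contradiction. Suppose $\{\mu_n\}$ is relatively compact but the condition fails for some $B\in\bob(S)$: there is $\delta>0$ with $\limsup_n\p[\mu_n B>t]\ge\delta$ for every $t$. Choosing $n_j\uparrow\infty$ with $\p[\mu_{n_j}B>j]\ge\delta/2$ and then, by relative compactness, a further subsequence with $\mu_{n_{j_l}}\tod\mu$, I would enlarge $B$ to a relatively compact open set $G\supseteq\overline B$ whose boundary is almost surely $\mu$-null (a $\mu$-continuity set). The preceding portmanteau-type lemma then gives $\mu_{n_{j_l}}G\tod\mu G$, and since $\mu_{n_{j_l}}G\ge\mu_{n_{j_l}}B$, the portmanteau inequality for real random variables yields $\p[\mu G\ge t]\ge\delta/2$ for every $t$, whence $\p[\mu G=\infty]\ge\delta/2>0$. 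This contradicts $\mu$ being a random element of $\m(S)$, for which $\mu G<\infty$ almost surely; hence the condition is necessary.

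The hard part will be the vague compactness criterion underlying the first step and its careful use in the second: indicators $\ind_{B_k}$ are not continuous and $\mu\mapsto\mu(B_k)$ is not vaguely continuous, so to obtain a genuinely vaguely closed and vaguely compact $K$ one must work with dominating functions in $C_c(S)$ and control the supports $S_k$. The same boundary bookkeeping reappears in the necessity argument, where the tail bound can only be transferred through weak convergence along $\mu$-continuity sets. This measure-theoretic care, rather than any deep new idea, is the main obstacle.
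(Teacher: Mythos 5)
The paper imports this lemma verbatim from \cite{Ka76} and explicitly states that no proofs are given in the appendix, so there is no in-paper argument to compare against; measured against the standard proof in Kallenberg, your argument is correct and follows essentially the same route, namely Prokhorov's theorem on the Polish space $\m(S)$ combined with the characterization of vaguely relatively compact subsets of $\m(S)$ as those families bounded on every $f\in C_c(S)$. The one ingredient you defer --- that compactness criterion --- is itself a standard fact (it sits next to the results the paper does quote from \cite{Ka76}), and both derivations you sketch for it are viable, provided in the Banach--Alaoglu route you also check that the weak-$*$ limit of Radon measures bounded on compacts is again Radon; the remainder (the $2^{-k}$ union bound over an exhausting sequence with Urysohn majorants $f_k$, and the necessity argument through a $\mu$-continuity enlargement $G\supseteq\overline B$ and the closed-set portmanteau inequality forcing $\p[\mu G=\infty]>0$) is sound, with only the cosmetic remark that in the compactness check for $K$ the single bound $f\le\norm{f}_\infty f_k$ already suffices since $B_j\subset B_k$ for $j\le k$.
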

Denote by $P(S)$ the set of all probability measures on $S$. Clearly $P(S)\subset M(S)$ and according to Lemma~\ref{weak.vague}, the weak and the vague topologies on $P(S)$ coincide. 
\begin{prop}[\cite{Bi68} p.30, Theorem 5.1.]\label{cont.m.cont}
If $S$ and $T$ are Polish spaces and $h:(S,\bo(S))\to (T,\bo(T))$ is a measurable mapping, then $h_*$ 
is continuous w.r.t. the weak topology
at point $\p\in P(S)$
if $h$ is $\p$-almost surely continuous.
\end{prop}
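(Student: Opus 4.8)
The plan is to reduce the continuity of $h_*$ at $\p$ to a sequential statement and then to invoke the Portmanteau characterization of weak convergence. Since $S$ is Polish, the weak topology on $P(S)$ is metrizable (e.g. by the Prokhorov metric), so it suffices to verify that $h_*$ is sequentially continuous at $\p$: I take an arbitrary sequence $\p_n\tow\p$ in $P(S)$ and aim to show $h_*\p_n\tow h_*\p$ in $P(T)$. Because $T$ is also Polish, the Portmanteau theorem applies, and the target convergence is equivalent to the family of inequalities $\limsup_n h_*\p_n(F)\le h_*\p(F)$ ranging over all closed sets $F\subseteq T$. This closed-set formulation is the one I would carry through.

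Fix a closed set $F\subseteq T$ and write $h_*\p_n(F)=\p_n(h^{-1}(F))$. The difficulty --- indeed the only real obstacle --- is that $h^{-1}(F)$ need not be closed, since $h$ is merely measurable, so I cannot apply Portmanteau to it directly. The remedy is to pass to the closure and to absorb the discrepancy into the discontinuity set $D_h$ of $h$. Concretely, I would first establish the inclusion
\[
\overline{h^{-1}(F)}\subseteq h^{-1}(F)\cup D_h.
\]
The proof is a one-line sequential argument, valid because $S$ is metrizable: if $x\in\overline{h^{-1}(F)}$ and $h$ is continuous at $x$, pick $x_k\in h^{-1}(F)$ with $x_k\to x$; then $h(x_k)\to h(x)$, and as each $h(x_k)\in F$ with $F$ closed, we get $h(x)\in F$, i.e. $x\in h^{-1}(F)$.

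With this inclusion I would chain the required estimate. Applying Portmanteau to $\p_n\tow\p$ at the closed set $\overline{h^{-1}(F)}$ gives $\limsup_n \p_n(h^{-1}(F))\le\limsup_n\p_n(\overline{h^{-1}(F)})\le\p(\overline{h^{-1}(F)})$. The inclusion above together with the hypothesis $\p(D_h)=0$ then yields $\p(\overline{h^{-1}(F)})\le\p(h^{-1}(F))+\p(D_h)=\p(h^{-1}(F))=h_*\p(F)$. Combining the two estimates gives $\limsup_n h_*\p_n(F)\le h_*\p(F)$ for every closed $F\subseteq T$, which is exactly the Portmanteau criterion for $h_*\p_n\tow h_*\p$, completing the argument.

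One bookkeeping point to settle so that $\p(D_h)$ is meaningful is the measurability of $D_h$: the oscillation function of $h$ is upper semicontinuous, hence $D_h$ is an $F_\sigma$ set and in particular Borel. I expect this to be routine, and I anticipate that the entire weight of the proof sits in the closure inclusion, whose validity rests precisely on $F$ being closed and on $h$ being continuous at the limit point --- which is why the almost-sure continuity hypothesis enters solely through $\p(D_h)=0$.
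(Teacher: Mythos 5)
Your proof is correct and complete: the reduction to sequential continuity via metrizability of the weak topology, the Portmanteau closed-set criterion, the inclusion $\overline{h^{-1}(F)}\subseteq h^{-1}(F)\cup D_h$, and the measurability of $D_h$ via the oscillation function are all sound. The paper does not prove this proposition itself but cites Billingsley's mapping theorem ([Bi68], Theorem 5.1), and your argument is precisely the standard proof of that cited result, so there is nothing to compare beyond noting the agreement.
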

Note that the version of Proposition~\ref{cont.m.cont} which is in \cite{Bi68}, is expressed for metric spaces. But, as noted in the beginning of the appendix, Polish spaces are metrizable and hence one can apply the same statement for such spaces. 

\section{Semigroup Actions} 
Let $X$ be a Hausdorff space.
An action of $(\bbN,+)$ on $X$ is a collection $\pi$
of mappings $\pi_n:X\to X$, $n\in \bbN$, such that
for all $x\in X$, and $m,n\in \bbN$,
$\pi_m  \circ \pi_n(x)= \pi_{m+n} (x)$.
When each of the mappings $\pi_n$ is continuous,
$\pi$ is also often referred to as a discrete time dynamical system.

On a Hausdorff space $X$, one can
endow the set $X^X$ with a topology,
e.g. that of pointwise convergence.
The closure of the action of $\bbN$
is then the closure $\overline \Pi$ of the set
$\Pi=\{\pi_n,n\in \bbN\}\subset X^X$
w.r.t. this topology. A classical instance (see e.g. 
\cite{ElElNe00}) is that where the space $X$ is compact, the
mappings $\pi_n$ are all continuous,
and the topology on $X^X$ is that of pointwise convergence.
Then $\overline \Pi$ is compact.

Denote the orbit $\{x,\pi(x),\pi_2(x),\cdots\}$
of $x\in X$ by $A_x$.
For all $x\in X$, the
closure $ \mathrm{cl}{A_x}$ of
$A_x$ is a closed $\pi$-invariant
subset of $X$. If, for all $n$, $\pi_n$ is continuous,
then the restriction of $\pi$ to $\mathrm{cl} {A_x}$
defines a semigroup action of $\bbN$.
The compactness of $\mathrm{cl} {A_x}$ is not granted when 
$X$ is non-compact. When it holds,
several important structural
properties follow as illustrated by the next lemmas
where $X$ is a metric space with distance $d$. Let
\begin{equation}\label{eq:omegalim}
\omega_x=\{y\in X \mbox{ s.t. } \exists n_1<n_2<\cdots \in \bbN 
\mbox{ with } \pi_{n_i} (x) \to y\}
\end{equation}
denote the {\em $\omega$-limit set} of $x$.
\begin{lem} [Lemma 4.2, p. 134, and p. 166 in \cite{BrTa10}]
\label{lemcherche}
Assume that $\pi_n$ is continuous
for all $n$ and that $\mathrm{cl} {A_x}$ is compact.
Then, for all neighborhoods $U$ of $\omega_x$, there exists
an $N=N(U,x)$ such that $\pi_n(x)\in U$ for all $n\ge N$.
Moreover $\omega_x$ is non empty,
compact and $\pi$-invariant.
\end{lem}
In words, under the compactness and continuity conditions,
the orbit is attracted to the $\omega$-limit set.
\begin{lem} [Lemma 2.9, p. 95 in \cite{BrTa10}]
\label{lemtak}
If $\mathrm{cl} {A_x}$ is compact, then the 
following property holds:
for all $\epsilon >0$, there exists $N= N(\epsilon,x)\in \bbN$ such that
for all $y\in \mathrm{cl} {A_x}$, the set $\{\pi_n(x), 0\le n\le N\}$ contains
a point $z$ such that $d(y,z)\le \epsilon$.
If in addition $\pi_n$ is continuous 
for all $n$, then the last property
is equivalent to the compactness of $\mathrm{cl} {A_x}$.
\end{lem}
In words, under the compactness condition,
in a long enough interval, the trajectory $\pi_n(x)$ visits a neighborhood
of every point of $\mathrm{cl} {A_x}$.

\section{Proof of Proposition \ref{contpos}}
\label{seccontpos}
Let $g$ be a \pomk{} the image of which at $\phi\in\n^0$ is
$x\in\phi$, with $x\neq 0$. Assume there is a point
$y\in\phi$ with $y\notin\{0,x\}$. Since $\phi$ is a discrete
subset of $\sp$ and $d\geq 2$ there exist curves 
$\gamma_1,\gamma_2:[0,1]\to\sp$ such that 
\begin{enumerate}
\item \label{ends} $\gamma_1(0)=\gamma_2(1)=x$ and $\gamma_2(0)=\gamma_1(1)=y$;
\item \label{intersections}$\gamma_1$ and $\gamma_2$ only intersect at their end-points; 
\item \label{points}$\gamma_1$ and $\gamma_2$ contain no point of $\phi$ other than $x$ and $y$. 
\end{enumerate}
Now let $\Gamma$ be a closed curve in $\n^0$ defined as 
\eqn{\Gamma:[0,1]\to\n^0;\quad
\Gamma(t)=(\phi\backslash \{x,y\})\cup \{\gamma_1(t),\gamma_2(t)\},
\ t\in[0,1].}
The continuity of $g$, \ref{intersections}. and \ref{points}. imply
that for all $t\in[0,1]$, $g(\Gamma(t))=\gamma_1(t)$. 
Hence $g(\Gamma(0))=x$ and $g(\Gamma(1))=y$.
But it follows from \ref{ends}. that $\Gamma(0)=\Gamma(1)=\phi$,
which contradicts the fact that $x$ and $y$ are different
points of $\phi$. When $\phi=\{0,x\}$, one obtains
the contradiction by letting $x$ go to infinity whereas in this situation,
$\{0,x\}$ converges to $\{0\}$ in the vague topology.  

\section*{Acknowledgments}
The authors would like to thank H. Thorisson, K. Alishahi, A. Khezeli
and A. Sodre, as well as the anonymous reviewer, for their
very valuable comments on this work.
The early stages of this work were initiated 
at Ecole Normale Sup\'erieure and INRIA,
where they were supported by a grant
from Minist\`ere des Affaires Etrang\`eres. 
The later stages were pursued 
at the University of Texas at Austin and were supported
by a grant of the Simons Foundation 
(\#197982 to UT Austin). The second author expresses
his gratitude to the higher administration of Sharif University of Technology,
especially to S.-G. Miremadi, for their crucial support. 

\bibliographystyle{amsplain}
\bibliography{bib-01-16}

\end{document}